\documentclass[a4paper,12pt]{article}

\usepackage{fullpage}%,inputenc}
\usepackage{amsthm,amsmath, amssymb}
\usepackage{palatino,comment,hyperref}

\input{xy}
\xyoption{all}
\xyoption{matrix}

\usepackage{color}

\usepackage{tcolorbox}

\definecolor{verdeoscuro}{rgb}{0,0.3,0.2}

\definecolor{gris}{rgb}{0.5,0.5,0.7}

\definecolor{verdeclaro}{rgb}{0.93,1,0.9}

\definecolor{azulclaro}{rgb}{0.01,0.5,0.8}

\definecolor{celeste}{rgb}{0.02,0.7,0.9}

\definecolor{marron}{rgb}{0.3,0.1,0.0}

\newtheorem{theorem}{Theorem}[section]
\newtheorem{teo}[theorem]{Theorem}

\newtheorem{lem}[theorem]{Lemma}

\newtheorem{prop}[theorem]{Proposition}

\newtheorem{coro}[theorem]{Corollary}

\theoremstyle{definition}

\newtheorem{defi}[theorem]{Definition}

\newtheorem{rem}[theorem]{Remark}

\DeclareMathOperator{\id}{Id}

\newcommand{\wt}{\widetilde}

\def\To{\Rightarrow}

\def\t{\triangleleft}

\def\bt{\blacktriangleleft}

\def\Conj{\mathrm{Conj}}
\def\vphi{\varphi}
\def\Tw{\mathrm{Tw}}

\author{Marco A. Farinati
\thanks{Dpto de Matem\'atica FCEyN UBA - IMAS (Conicet). 
e-mail: mfarinat@dm.uba.ar.
Partially supported by 
UBACyT and
PICT 2018-00858 ``Grupos cu\'anticos, categor\'ias trenzadas e invariantes de nudos''.}
 }

\begin{document}
\title{Biracks: a notational proposal and applications}

\maketitle

\begin{abstract}
I propose a notation for biracks that includes
from the begining the knowledege of the associated 
(or underlying, or derived) rack structure. Motivated by
results of Rump in the involutive case, this notation
allows to generalize some results from involutive case
 to the  non necessarily
involutive solutions, and also to view  some twisting constructions
and its relation to the underlying rack structure
in a more transparent way. Two applications are given.
\end{abstract}

%\tableofcontents

\section*{Introduction and conventions}

When considering  
set theoretical solutions of the Braid equation, 
the rack-type solutions are the maps of the form
\[
\sigma:X\times X\to X\times X\]
\[
(x,y)\mapsto (y,x\t y)
\]
where $\t:X\times X\to X$ is a binary operation.
It is an easy checking that $\sigma$ satisfies the braid equation
\[
(\sigma\times \id)(\id\times\sigma)
(\sigma\times \id)=(\id\times\sigma)
(\sigma\times \id)(\id\times\sigma)
 \]
 if and only if the operation $\t$ satisfies
 \[
(x\t y)\t z=(x\t z)\t (y\t z)
\]
That is, the operation $\t$ is self-distributive. In this case
$(X,\t)$ is called a {\em shelf}. But it is also easy to see that
\[\sigma:X\times X\to X\times X\]
\[(x,y)\mapsto (y,x\t y)\]
is bijective if and only if $(-)\t y:X\to X$
 is bijective for every $y\in X$. In such a case, $(X,\t)$ is called
 a {\em rack}. A paradigmatic example is $X=G$ a group and
\[
x\t y:=y ^{-1}xy
\]
This is called the conjugacy rack $\Conj(G)$. In this case one also see
\[
x\t x=x, \ \forall x\in X
\]
A {\em quandle} is a rack where $x\t x=x$ for all $x\in X$.
There are racks that are not quandles, an extreme case
is for example if
 $f:X\to X$ is a fixed bijection and
 \[
 x\t y:=f(x)
 \]
It is well-known that, for any rack, the natural map
\[
\vphi:X\to X\]
\[
x\mapsto\vphi(x):=x\t x
\]
is a bijection, that it is also a rack automorphims, ans that one can ``untwist'' $(X,\t)$ by $\phi$ (or twist ot by $\phi^{-1}$) and get a quandle, on the same underlying set $X$
using a new operation
\[
x\wt \t y:=\phi^{-1}(x\t y)
\]
One conclude that the set of rack structures in a 
fixed set $X$ is  ``fibered'' over the set of quandle
 opperations.

For general map $\sigma:X\times X\to X\times X$,
if has the two coordinates that depends both in
 $x$ and $y$, then notation is not unified. 
 Usual notations are like
 \[
 \sigma(x,y)=\big(\sigma ^1(x,y),\sigma^2(x,y)\big)
=\big(g_x(y),f_y(x)\big)
=\big({} ^xy,x ^y\big)
\]
\def\os{\overline{*}}
\def\us{\underline{*}}
or also, determined by two operations $\overline{*}$
and $\us$ such that
\[
\sigma(x,x\os y)=(y,x\us y)
\]
In each case, the braid equation
\[
(\sigma\times \id)(\id\times\sigma)
(\sigma\times \id)=(\id\times\sigma)
(\sigma\times \id)(\id\times\sigma)
\]
can be written, of course, in any of the choosen
notation, but despite the fact that the {\em picture}
 of the equation is clear,
the concrete formulas in terms of the choosen notation
is complicated, and properties are not easily seen from the
 expanded formulas. 
 
For general bijective solutions of the braid equation 
$\sigma:X\times X\to X\times X$,
say
 \[
 \sigma(x,y)=\big(g_x(y),f_y(x)\big)
\]
 one says that
\begin{itemize}
\item $\sigma$ is left non degenerate if $g_x:X\to X$
is biyective for any $x\in X$.
\item $\sigma$ is right non degenerate if $f_y:X\to X$
is biyective for any $y\in X$.
\item $\sigma$ is non degenerate, or a {\em birack}, if it is both left and right non-degenerate.
\item $\sigma$ is a {\em biquandle} if it is a birack and
for any $x\in X$ there exists a unique $y=y(x)$, sometimes called $s(x)$, such that
\[
\sigma(x,s(x))=(x,s(x))
\]
One can also consider a dual definition, or dual property: 
``$\sigma$ is a birack and for any $y$ there
exists a unique $x=x(y)$ such that $\sigma(x(y),y)=(x(y),y)$'', but one can see that this property is equivalent
to the previously stated condition.

\end{itemize}

 One of the non-trivial but very useful facts
 about (left non-degenerate) set theoretical solutions 
 of the braid equation is that they always determine
 a so called {\em derived solution}, that is a rack structure
on the same underlying set $X$, associated to
 $\sigma$. In this work, I propose to use a notation 
 for $\sigma$ that takes into account from the beginning the
  associated -or derived- rack. As an application, I show a generalization of a result of Rump about right non-degenerate solutions.
  Rump's result is a characterization for  non-degenerate and {\em involutive} ones, that is, solutions $\sigma$
  of the braid equation with $\sigma^2=\id_{X\times X}$
   (or equivalently, the ones whose derived rack is trivial: $x\t y=x$ for all $x,y$). 
  Rump's statement generalizes to arbitrary (i.e. non necessarily involutive) solutions, and one gets a formula similar to Rump's one but involving also the derived rack: see
  Theorem \ref{xoverrack} and
  Theorem \ref{x2}.
  This generalization shows that the  ``extra'' non-degeneracy condition
  proposed in \cite{Nelson} (called ``diagonally bijective,
  see Definition \ref{defnelson} and discussion in
  Subsection
   \ref{sectionnelson})
  is automatically satisfied
  by the usual left and {\em right} non-degeneracy.

Also, I study two ways of twisting a solution by an automorphism, and their  relation  with the  operations
defining the solution (one of them being the rack one).
As a consequence, one can see in a transparent way that
every birack is a twisting of a biquandle,
because one can also see (in a transparent way) that 
 twist map
of a general birack is exactely the same map as the Twist
map of its derived rack. 
It worth to notice that
one of the twisting procedure I consider, 
gives the family of 
skew-racks considered in \cite{N}.
Finally, I 
compare the kind of solutions that one gets from skew braces
 (yet another notation for a special family of solutions), and
 we see in a clear way that the skew-brace solutions
  corresponds  with  solutions whose
associated rack is a group with conjugation as operation $\t$.

\section{From racks to biracks and viceversa}

In \cite{R}, among ather things,  the notion of cyclic set is introduced. 
A set $X$
together with a  binary operation $\cdot$ is called a cyclic set if
$x\cdot-:X\to X$ is a bijective for all $x\in X$,
with inverse denoted by $x*-$,  and the equality
\[
(y\cdot x)\cdot (y\cdot z)=(x\cdot y)\cdot (x\cdot z)
\]
holds for all $x,y,z\in X$. If this is the case, then the map
$\sigma:X^2\to X^2$ defined by
\[
\fbox{\fbox{$
\sigma(x,y)=
\Big(x* y\ ,\ (x*y)\cdot x \Big) 
$}}
\]
is a left non-degenerte involutive solution of the Braid equation, and reciprocally,
every left non-degenerate involutive solution of the Braid equation
is of this form. 
%One may think of a similar construction
% for right non-degenerated
% solutions, but a corollary of Rump's result is that for finite $X$ and involutive
%solutions, to be right non-degenerated is equivalent 
%to being  left non-degenerated.

In this section we generalize this construction to non necesarily 
involutive solutions. Given $(X,\sigma)$ a non degenerated
solution of the Braid equation we
recall the construcion of the associated rack.
The ``remarkable map'' 
$\Phi:X\times X\to X\times X$ can be 
defined in the following way
(see \cite{S} or \cite{LYZ}): using the notation 
$\sigma(x,y)=(g_x(y),f_y(x))$, define $\Phi$  by
$\Phi(x,y):=(x,g_xy)$.
Notice that  the nondegenerated
 condition means that both $g_x$ and $f_x$ are bijective maps for any $x$, in particular $\Phi$ is bijective.
This map was considered in \cite{S}, \cite{LYZ}, and several places after.
One of its properties is that there is a commutative diagram:
\[
\xymatrix{
X\times X\ar[r]^\Phi\ar[d]^\sigma& X\times X\ar[d]^{c_\t}\\
X\times X\ar[r]^\Phi& X\times X
}
\]
where $c_\t$ is of the form $c_\t(x,y)=(y\ ,\ \dots)$,
and that $c_\t$ also verifies Braid equation (this last assertion
can be proved directly, or also can be seen as a consequence of Theorem \ref{xoverrack}). That is, $c_\t(x,y)=
(y,x\t y)$ where $\t$ is necesarily a rack operation in $X$. This rack associated to $\sigma$ is called the {\em derived rack} and in terms of $g_x(y)$ and $f_y(x)$, concretely:
\[
x\t y:=x\t_\sigma y = g_{y}(f_{g_x^{-1}(y)}(x))
\]
or, equivalently, it is determined by

\begin{equation}\label{derived}
x\t g_xy=g_{g_xy}(f_{y}x)
\end{equation}

\begin{rem}
$\sigma^2=\id_{X^2}\iff
c_\t^2=\id_{X^2}$, in particular $\sigma$ is involutive
if and only if the associated rack is trivial.
\end{rem}

\begin{rem}
In \cite[Lemma 3]{FG} it is 
proved that a birack  $\sigma$ is a biquandle
if and only if its derived rack is a quandle.
\end{rem}

\subsection{Extra operation: a notational proposal}
If $\sigma(x,y)=(g_xy,f_yx)$ is a left non degenerated solution of the Braid equation
 one can simply {\em denote}
 \[
 x*y:=g_xy
 \] 
 The (left) non degeneracy condition says that, for 
 fixed $x$, the
 formula $x*y$ dependes biyectively on $y$. Let us denote $\cdot$
 the inverse operation, that is,
\[
x\cdot z=y \iff x*y=z
\]
Recall from \eqref{derived}
$
x\t g_xy=g_{g_xy}(f_{y}x)$, so in  terms of the operations we have
\[
x\t (x*y)=g_{x*y}(f_{y}x)
=(x*y)*(f_{y}x)
\]
or equivalently
\[
f_{y}x=
(x*y)\cdot(x\t (x*y))
\]
We conclude that
\begin{equation}
\fbox{
\fbox{
$
\sigma(x,y)=(g_xy,f_yx)=
\Big(x*y,
(x*y)\cdot\big(x\t (x*y)\big)\Big)
$}}
\end{equation}
In particular, if the rack is trivial (e.g. $\sigma$ involutive)
we have $x\t (x*y)=x$, so
\[
x\t (x*y)=x
\ \ \ \To \ \ \ \ 
\sigma(x,y)=\big(x*y,(x*y)\cdot x\big)
\]
just as in Rump's description.

\begin{rem}
Since $(x,y)\mapsto (x,x\cdot y)$ is bijective, another equivalent formula for the braiding is given as evaluated in elements of that form, and sometimes gives a simplification
is computation:
\begin{equation}
\sigma(x,x\cdot y)
=\big(y,
y\cdot (x\t y)\big)
\end{equation}
\end{rem}

One may wonder, given a set $X$ together with two 
binary operations $*$ and $\t$, where $x*(-)$ is invertible for every $x$, and denoting $\cdot$ the inverse operation:
\[
x\cdot z=y\iff z=x*y,\]
which are the conditions on $\cdot$ and $\t$ such that
the formula 
\[
\sigma(x,y)=
\Big(x*y,
(x*y)\cdot\big(x\t (x*y)\big)\Big)
\]
 gives a solution of Braid equation.

The following Theorem is an answer, 
that generalizes Rump's result.
 One can found related results in the litterature,
 we mention \cite{GV} (see discusion in Section
 \ref{braces}). One can see our result as a 
 generalization of the situation considered in 
 \cite[Definition 5.1, Remark 5.2]{AV}.
 Also the
 corresponding statement for parametric solutions
is given in \cite[Theorem 2.12]{Anastasia}:

\begin{teo}
\label{xoverrack}
Let
$\sigma(x,y)=(g_xy,f_yx)$ be a left non-degenerate map in $X^2$, denote
$x*y:=g_xy$ and $x\cdot -$ the inverse of $x*-$; denote also
$x\t y:=g_y(f_{g_x^{-1}y}x)=y*(f_{x\cdot y}x)$.
Then $\sigma$ satisfy the Braid equation 
if and only if the 
 following conditions hold
 \begin{enumerate}
\item 
$
(y\cdot (x\t  y))\cdot (y\cdot  z)= 
(x\cdot y)\cdot (x\cdot z)$, $\forall x,y,z\in X$.
\item
$x\cdot-$ is
a  morpism with respect to $\t$ operation, that is
\[
(x\cdot y)\t(x\cdot z)=x\cdot(y\t z)
\]
\item $(X,\t)$ is self distributive (i.e. a shelf):
\[
(x\t y)\t z=(x\t z)\t (y\t z)
\]
\end{enumerate}
\end{teo}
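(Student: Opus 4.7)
The plan is to unpack the braid equation using the simplified evaluation
\[
\sigma(x, x\cdot y) = \bigl(y,\ y\cdot(x\t y)\bigr),
\]
which is a direct rewriting of the defining identity $x\t y = y*(f_{x\cdot y}x)$, i.e.\ $f_{x\cdot y}x = y\cdot(x\t y)$. To chain this formula iteratively I would apply both sides of the braid equation to a generic triple written in the form $(a,\ a\cdot b,\ (a\cdot b)\cdot c)$; this is no loss of generality since any triple $(x,y,z)$ equals such a triple with $b = x*y$ and $c = y*z$. Every subsequent application of $\sigma$ to a pair of consecutive coordinates will then hit an argument of the form $(u, u\cdot v)$, so the boxed identity can be used three times on each side without ever having to expand $f$ explicitly.

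Expanding this way, the braid equation becomes equivalent to three coordinate equalities. The first one takes the form $b*\bigl((b\cdot(a\t b))*((a\cdot b)\cdot c)\bigr) = a*c$, and after applying $b\cdot-$ and then $(b\cdot(a\t b))\cdot-$ on both sides and relabelling $(a,b,x*c)\to(x,y,z)$, this becomes exactly condition~1. Assuming condition~1, the second coordinate equality reduces, by using condition~1 once to cancel a common prefix and then inverting $\cdot$ twice, to the identity $x\cdot(y\t z) = (x\cdot y)\t(x\cdot z)$, which is condition~2. Finally, assuming both previous conditions, the third coordinate equality becomes
\[
(b\cdot j)\cdot\bigl[b\cdot((a\t b)\t j)\bigr] = (b\cdot j)\cdot\bigl[b\cdot((a\t j)\t(b\t j))\bigr],
\]
with $j = a*c$, which after cancelling $(b\cdot j)\cdot$ and then $b\cdot$ is exactly condition~3. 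The converse direction consists in running these three reductions backwards: starting from conditions~1--3 and the parametrization above, the three coordinates of both sides of the braid equation coincide.

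The hard part will not be conceptual but administrative: keeping track of the roughly ten intermediate symbols ($b*g$, $b\cdot(a\t b)$, $(a\cdot b)\t c$, $(j\cdot m)*(c\cdot i)$, etc.) and repeatedly applying conditions~1 and~2 in the precise form needed to rewrite compound expressions such as $(u\cdot(v\t u))\cdot(u\cdot w)$ or $u\cdot(v\t w)$ into the shape required by the next step. The parametrization $(a, a\cdot b, (a\cdot b)\cdot c)$ is what makes the algebra tractable; without it one has to unfold $g_x$ and $f_y$ directly and the derived rack structure becomes invisible in the computation.
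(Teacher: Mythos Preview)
Your proposal is correct and follows essentially the same route as the paper: evaluate both sides of the braid equation on a triple parametrized via the dot operation, use the identity $\sigma(u,u\cdot v)=(v,v\cdot(u\t v))$ at each step, and show that the three coordinate equalities are equivalent (successively) to conditions~1, 2, 3. The only cosmetic difference is that the paper parametrizes the third entry as $(x\cdot y)\cdot(x\cdot z)$ rather than your $(a\cdot b)\cdot c$, so that the first coordinate on the RHS comes out as $z$ directly instead of $a*c$; this is exactly your relabelling $a*c\mapsto z$, so the computations coincide.
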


The proof of this theorem is a straightforward checking,
 but for 
convinience of the reader we give a direct proof of one implication.
We begin with $(X,\t,\cdot)$ where $\t$ and $\cdot$ are
arbitrary operations satisfying the non degeneracy condition
that $-\t x$ and $x\cdot -$ are bijections in $X$. Denote
$x*-$ the inverse of $x\cdot-$ and consider
the map $\sigma:X\times X\to X\times X$ defined by
\[
\sigma(x,y)=
\Big(x*  y\ ,\ (x*  y)\cdot (x\t(x*  y)) \Big) 
\]
We will check Braid equation, but instead of checking 
on elements $(x,y,z)$, it is more convenient to check
 on elements of the form
\[
(x',y',z'):=\big(x,x\cdot y,(x\cdot y)\cdot(x\cdot z)\big)
\]
The map
$(x,y,z)\mapsto \big(x,x\cdot y,(x\cdot y)\cdot(x\cdot z)\big)$
being bijective, this checking is equivalent.

Let us denote LHS the result in $X\times X\times X$ of this
part of Braid equation:
\[
\xymatrix{
x\ar[rd]|\hole&x\cdot y\ar[ld]&(x\cdot y)\cdot (x\cdot z)\ar@{=}[d]\\
y \ar[d]& y\cdot (x\t y)\ar[rd]|\hole&z'\ar[ld]\\
y \ar[rd]|\hole& [y\cdot (x\t y)]*z'=:A\ar[ld]&A\cdot ([y\cdot (x\t y)] \t A)\ar@{=}[d]\\
y*A=:B&B\cdot (y\t B)&A\cdot ([y\cdot (x\t y)] \t A)
}
\]
While RHS is the result of
\[
\xymatrix{
x\ar[d]&x\cdot y\ar[rd]|\hole&(x\cdot y)\cdot(x\cdot z)\ar[ld]\\
x\ar[rd]|\hole&x\cdot z\ar[ld] & (x\cdot z)\cdot ((x\cdot y)\t(x\cdot z))\ar@{=}[d]\\
z\ar[d]&z\cdot (x\t z)\ar[rd]|\hole&C\ar[ld]\\
z&[z\cdot (x\t z)]*C:=D&D\cdot ( [z\cdot (x\t z)]\t D)
}
\]
The first (left most) condition in Braid equation is
\[
B=y*A=y*([y\cdot (x\t y)]*z')\overset{?}{=} z
\]
which is equivalent to
\[
z'=(y\cdot (x\t y))\cdot ( y\cdot z)
\]
Recall $z'=(x\cdot y)\cdot (x\cdot z)$, so
we get identity 1:
\[
(x\cdot y)\cdot (x\cdot z)=(y\cdot (x\t y))\cdot ( y\cdot z)
\]
Now the second (middle) condition,
since $B=z$ 
and $C= (x\cdot z)\cdot ((x\cdot y)\t(x\cdot z))$ 
is
\[
B\cdot (y\t B)=
[z\cdot (x\t z)]*C
\]
That is
\[
 z\cdot (y\t z)=
[z\cdot (x\t z)]*
[
 (x\cdot z)\cdot ((x\cdot y)\t(x\cdot z))]
\]
or equivalently
\[
[z\cdot (x\t z)]\cdot[z\cdot (y\t z)]=
 (x\cdot z)\cdot ((x\cdot y)\t(x\cdot z))
\]
But using
\[
(b\cdot a)\cdot (b\cdot c)=(a\cdot (b\t a))\cdot ( a\cdot c)
\]
 for $a=z$, $b=x$ and $c=(y\t z)$
we get that the second condition  is equivalent to
\[
(x\cdot z)\cdot (x\cdot (y\t z))
=
 (x\cdot z)\cdot \big((x\cdot y)\t(x\cdot z)\big)
\]
and after canceling $(x\cdot z)$ we get identity 2:
\[
x\cdot (y\t z)
=
(x\cdot y)\t(x\cdot z)
\]

Finally, the third (rightmost) condition is
\[
A\cdot ([y\cdot (x\t y)] \t A)
=D\cdot ( [z\cdot (x\t z)]\t D)
\]
Recall that
\[A= [y\cdot (x\t y)]*z'
=
 [y\cdot (x\t y)]*[(x\cdot y)\cdot (x\cdot z)]
\]
and using the first condition we replace
$(x\cdot y)\cdot (x\cdot z)$ and get
 \[
A=
 [y\cdot (x\t y)]*[(y\cdot (x\t y))\cdot ( y\cdot z)]
=  y\cdot z
\]
So, the 3rd coordinate of LHS is
\[
(y\cdot z)\cdot ([y\cdot (x\t y)] \t (y\cdot z))
=(y\cdot z)\cdot  (y\cdot [(x\t y)\t z])
\]
where we use that $y\cdot(-)$ is a morphism for $\t$.
For RHS, 
$D=[z\cdot (x\t z)]*C$, where 
\[
C=(x\cdot z)\cdot ((x\cdot y)\t(x\cdot z))
=(x\cdot z)\cdot (x\cdot (y \t z))
=(z\cdot (x\t z))\cdot ( z\cdot (y\t z))\]
So $
D=[z\cdot (x\t z)]*C=z\cdot (y \t  z)$.
Hence,
\[
RHS=D\cdot ( [z\cdot (x\t z)]\t D)
\]
\[
=
(z\cdot (y \t  z))\cdot ( [z\cdot (x\t z)]\t (z\cdot (y \t  z)))
\]
\[
=
(z\cdot (y \t  z))\cdot ( z\cdot [(x\t z)\t (y \t  z)] )
\]
and using again
\[
(b\cdot a)\cdot (b\cdot c)=
(a\cdot (b\t a))\cdot ( a\cdot c)
\]
for $a=z$, $b=y$ and $c=[(x\t z)\t (y \t  z)]$ we get
\[
RHS=
(y\cdot z)\cdot (y\cdot [(x\t z)\t (y \t  z)])
\]
We conclude $LHS=RHS$ if and only if
\[
(y\cdot z)\cdot  (y\cdot [(x\t y)\t z])
=
(y\cdot z)\cdot (y\cdot [(x\t z)\t (y \t  z)])
\]
that, after cancelling $(y\cdot z)\cdot -$ and $y\cdot -$, is equivalent to
\[
(x\t y)\t z
=
(x\t z)\t (y \t  z)
\]

\section{Bijectivity and Right non-degeneracy}

We emphasis that if $(X,\t)$ is a rack and $*:X\times X\to X$ 
is an operation satisfying the conditions of the previous theorem, then
\[
\sigma(x,y)=\Big(x*y,(x*y)\cdot \big(x\t (x*y)\big)\Big)
\]
is left non-degenerate, but not necesarily {\em right}
 non degenerate.
An extreme case is for trivial racks: $x\t y=x$ for all $y$ that
corresponds to the involutive case ($\sigma^2=\id$)
\[
\sigma(x,y)=\big(x*y,(x*y)\cdot x\big)
\]
considered by Rump (see \cite{R}). 
The non-degeneracy of the Rack structure do not have to do with right non-degeneracy, but rather with bijectivity of $\sigma$ as we can see in the following Lemma:

\begin{lem}
With same notations as in Theorem \ref{xoverrack}, if
 $\sigma:X\times X\to X\times X$  is a left-non degenerate
solution of the Braid equation, then the associated shelf $(X,\t)$ 
is a rack
if and only if $\sigma$ is bijective.
\end{lem}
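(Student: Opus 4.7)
The plan is to exploit the intertwining relation
\[
\Phi \circ \sigma = c_\triangleleft \circ \Phi
\]
recalled in the excerpt, where $\Phi(x,y) = (x, g_x y)$ and $c_\triangleleft(x,y) = (y, x \triangleleft y)$. The idea is that since $\sigma$ is left non-degenerate, $g_x$ is a bijection for every $x$, so $\Phi$ is bijective (its inverse is $(x,y) \mapsto (x, g_x^{-1} y)$). Hence the identity above shows that $\sigma$ is bijective if and only if $c_\triangleleft$ is bijective.

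So the problem reduces to proving that $c_\triangleleft : X \times X \to X \times X$ is bijective if and only if $(X,\triangleleft)$ is a rack, i.e., if and only if $-\triangleleft y : X \to X$ is bijective for every $y \in X$. This is elementary: given $(a,b) \in X \times X$, we want to solve $c_\triangleleft(x,y) = (a,b)$. The first coordinate forces $y = a$, and the second coordinate forces $x \triangleleft a = b$. Both injectivity and surjectivity of $c_\triangleleft$ therefore translate directly into injectivity and surjectivity of the map $(-) \triangleleft a$ for each $a \in X$.

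Putting the two steps together: $\sigma$ bijective $\iff$ $c_\triangleleft$ bijective $\iff$ $(X,\triangleleft)$ is a rack. There is no real obstacle here; the only thing worth emphasizing is that the shelf structure $(X, \triangleleft)$ is already guaranteed by Theorem \ref{xoverrack} (condition 3), so the content of the lemma is precisely the additional bijectivity of $(-) \triangleleft y$, which, thanks to the conjugation by $\Phi$, is equivalent to bijectivity of $\sigma$ itself.
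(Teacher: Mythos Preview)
Your argument is correct and is in fact the cleanest way to see the statement: since $\Phi$ is bijective by left non-degeneracy, the conjugation relation $\Phi\circ\sigma=c_\t\circ\Phi$ immediately reduces bijectivity of $\sigma$ to that of $c_\t$, and the latter is transparently equivalent to $(-)\t y$ being bijective for each $y$. The paper takes a different, more hands-on route: it works entirely in the $(\cdot,\t)$ notation, showing directly that bijectivity of $\sigma$ forces surjectivity and injectivity of $(-)\t y$, and, in the converse direction, it manipulates the identity $\sigma(x,x\cdot y)=(y,y\cdot(x\t y))$ until an explicit formula
\[
\sigma^{-1}(x,y)=\Big((x*y)\t^{-1}x,\ \big((x*y)\t^{-1}x\big)\cdot x\Big)
\]
drops out. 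Your approach is shorter and more conceptual; the paper's approach is more computational but has the advantage of producing this closed formula for $\sigma^{-1}$, which your conjugation argument does not directly display (though of course one could extract it by writing out $\Phi^{-1}\circ c_\t^{-1}\circ\Phi$).
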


\begin{proof}Assume $\sigma$ is bijective.
Let $x,y\in X$, we compute $\sigma^{-1}(y,y\cdot x)=:(a,b)$.
Since $a*-$ is a bijection, we can write $b=a\cdot c$ for some $c$
 and get
\[
(y,a\cdot x)=\sigma(a,b)=\sigma(a,a\cdot c)=
\big(c,c\cdot (a\t c) \big)
\]
Then necessarily $c=y$ and
\[
y\cdot (a\t y)=y\cdot x\
\ \To a\t y =x
\]
so, $-\t y$ is surjective. But also, if $a\t y = a'\t y$ then
\[
\sigma(a,a\cdot y)
=
\big(y,y\cdot(a\t y)\big)
=\big(y,y\cdot(a'\t y)\big)
=
\sigma(a',a'\cdot y)
\]
and because $\sigma$ is bijective we conclude $a=a'$. That is, $-\t y$ is also injective.

On the other direction, if $(-)\t x$ is bijective for all $x$,
denote as usual
 $x\t^{-1}y$ the unique element such that $(x\t ^{-1}y)\t y=x$.
 From
\[
\sigma(x, x\cdot y)
=\big(y,y\cdot (x\t y)\big)
=\big(y,(y\cdot x)\t (y\cdot y)\big)
\]
we also deduce

\[
\sigma\big((y*x), (y*x)\cdot y)
=\big(y,x\t (y\cdot y)\big)
\]
and so
\[
\big(y,x\big)=
\sigma\Big(
y*\big(x\t^{-1}(y\cdot y)\big)
,
 \big(y*(x\t^{-1}(y\cdot y)\big)\cdot y
 \Big)
\]
\[
=
\sigma\Big(
(y*x)\t^{-1} y
,
 \big((y*x)\t^{-1} y\big)\cdot y
 \Big)
\]
From the above equality one can deduce -and easily check- 
the formula for $\sigma ^{-1}$:

\[
\sigma^{-1}(x,y)=
\Big(
(x*y)\t^{-1} x
,
 \big((x*y)\t^{-1} x\big)\cdot x
 \Big)
\]

\end{proof}

\subsection{About right non-degeneracy and the square map}

For {\em involutive solutions} (where the associated rack is trivial) and right-non degeneracy, Rump proves the following:

\begin{prop}\cite[Proposition 2]{R}
Let $\sigma$ be an involutive and 
 left-non degenerate solution of the Braid equation, then 
 it is right non-degenerate if and only if the map 
 $x\mapsto x\cdot x$ is bijective.
\end{prop}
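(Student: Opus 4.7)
The plan is to reduce both directions of the equivalence to a single intertwining identity between the square map $T(x) := x \cdot x$ and the second-coordinate map $f_y(x) := (x*y) \cdot x$ of $\sigma$. Since the involutive hypothesis makes the derived rack trivial, Theorem \ref{xoverrack} reduces the content of condition (1) to the cyclic identity $(y \cdot x) \cdot (y \cdot z) = (x \cdot y) \cdot (x \cdot z)$, and two specializations of this identity do all the work. Setting $z = x$ yields the auxiliary formula $T(y \cdot x) = (x \cdot y) \cdot T(x)$, and substituting $y \leftarrow x*y$ in this formula (using $x \cdot (x*y) = y$) yields the key intertwining
\[
T(f_y(x)) = y \cdot T(x),
\]
which can be written as $T \circ f_y = L_y \circ T$, where $L_y(u) := y \cdot u$ is a bijection by left non-degeneracy.

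From this, the easy direction ($T$ bijective $\Rightarrow$ right non-degenerate) is immediate: solving the intertwining gives $f_y = T^{-1} \circ L_y \circ T$, a composition of bijections.

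For the converse I would assume $f_y$ is bijective for every $y$ and handle injectivity and surjectivity of $T$ separately. For injectivity, I first observe that the fixed points of $\sigma$ are exactly $\{(x, T(x)) : x \in X\}$, since $\sigma(x,y) = (x,y)$ forces $x*y = x$ and hence $y = T(x)$; a collision $T(a) = T(b) =: t$ with $a \neq b$ would produce two distinct fixed points in the fiber $X \times \{t\}$, giving $f_t(a) = t = f_t(b)$ and contradicting injectivity of $f_t$. For surjectivity, given $z \in X$, I would use right non-degeneracy at $y = z$ to produce $a$ with $f_z(a) = z$, i.e.\ $(a*z) \cdot a = z$; setting $u := a*z$, both equalities $a \cdot u = z$ and $u \cdot a = z$ then hold, and the auxiliary formula $T(u \cdot a) = (a \cdot u) \cdot T(a)$ becomes $T(z) = z \cdot T(a)$. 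Cancelling $L_z$ and using $T(z) = z \cdot z$ forces $T(a) = z$, so $z$ is hit by $T$.

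The substitutions in the cyclic identity are routine. The real insight, and the step I expect to be the main obstacle on a first attempt, is the surjectivity argument: one has to spot that the $a$ produced by right non-degeneracy at $y = z$ is exactly the needed $T$-preimage of $z$, so that a single use of the cyclic identity converts one bijectivity condition into the other.
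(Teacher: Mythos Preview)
Your argument is correct. Note first that the paper does not itself prove this proposition: it quotes it from Rump and then proves the generalization Theorem~\ref{x2} under the extra hypothesis that $X$ is finite, via the two unnamed propositions that follow. So the natural comparison is with those propositions, specialized to the involutive (trivial-rack) case.

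For the direction ``$T$ bijective $\Rightarrow$ right non-degenerate'' your intertwining $T\circ f_y = L_y\circ T$ is exactly what the paper obtains: its identity $\big((x*y)\cdot(x\t(x*y))\big)^2 = y\cdot(x^2\t y)$ collapses, when $\t$ is trivial, to $T(f_y(x)) = y\cdot T(x)$, and the conclusion is drawn the same way.

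For the converse the approaches diverge. The paper proves the identity $x^2\circ\vphi(x^2)=x$, which in the involutive case says $f_{T(x)}(x)=T(x)$; this is equivalent to your fixed-point observation and yields injectivity of $T$ in the same way. The paper then invokes finiteness of $X$ to get surjectivity. Your surjectivity argument is genuinely different and stronger: by choosing $a$ with $f_z(a)=z$ and feeding the resulting pair $u\cdot a = a\cdot u = z$ back into the auxiliary formula, you recover Rump's original statement without any finiteness assumption. This step has no counterpart in the paper's treatment; it is precisely what the paper gives up when passing to the non-involutive generalization.
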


In the general (i.e. non neccessarily involutive case)
we have the following

\begin{teo}\label{x2}
Let $\sigma$ be a left non-degenerate solution
given by
\[
\sigma(x,y)=\Big(
x*y,(x*y)\cdot \big(x \t (x*y)\big)
\Big)
\]
where
$\cdot$ and $\t$  are
 two operations satisfying conditions 1,2,3 of Theorem \ref{xoverrack}.
Assume in addition that $X$ is a finite set. Then
$\sigma$ is  right non-degenerate if and only if the map 
\[
x\mapsto x\cdot x
\]
 is bijective.
\end{teo}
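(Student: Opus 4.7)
The proof rests on a single identity. Setting $z=y$ in condition~(1) of Theorem~\ref{xoverrack} and writing $q(w):=w\cdot w$, one obtains
\begin{equation*}
q(x\cdot y)\;=\;\bigl(y\cdot(x\t y)\bigr)\cdot q(y).
\end{equation*}
In view of the alternative formula $\sigma(x,x\cdot y)=(y,\,y\cdot(x\t y))$, this is equivalent to the compact statement ($\star$): whenever $(u,v)=\sigma(x,Y)$,
\begin{equation*}
q(Y)\;=\;v\cdot q(u), \qquad\text{equivalently,}\qquad q(u)\;=\;v* q(Y).
\end{equation*}
This is the natural non-involutive analogue of the identity that drives Rump's proof in the involutive case, and it will do most of the work in both directions.

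For the direction ($\Leftarrow$), assume $q$ is bijective. Since $X$ is finite it is enough to prove each $f_Y$ is injective. Suppose $f_Y(x)=f_Y(x')=v$ and put $u=x* Y$, $u'=x'* Y$. Applying ($\star$) to both $(x,Y)$ and $(x',Y)$ yields $v\cdot q(u)=q(Y)=v\cdot q(u')$; canceling the bijection $v\cdot-$ and using injectivity of $q$ forces $u=u'$. The definition $v=u\cdot(x\t u)=u\cdot(x'\t u)$ together with injectivity of $u\cdot-$ then gives $x\t u=x'\t u$. To cancel $\t$ one observes that ($\star$) combined with bijectivity of $q$ pins down $Y=q^{-1}(v\cdot q(u))$ uniquely from $(u,v)$; together with left non-degeneracy this produces an explicit inverse for $\sigma$, so $\sigma$ is bijective and the preceding Lemma gives that $(X,\t)$ is a rack, hence $-\t u$ is bijective and $x=x'$.

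For the direction ($\Rightarrow$), assume $\sigma$ is right non-degenerate. Together with left non-degeneracy this implies $\sigma$ is bijective (a standard fact for two-sided non-degenerate set-theoretical solutions, also a consequence of ($\star$)), hence $(X,\t)$ is a rack and $\varphi(x):=x\t x$ is a bijection of $X$. Since $a* q(a)=a*(a\cdot a)=a$, one computes
\begin{equation*}
\sigma(a,q(a))=\bigl(a,\,a\cdot\varphi(a)\bigr), \qquad\text{i.e.}\qquad f_{q(a)}(a)=a\cdot\varphi(a),
\end{equation*}
and analogously $f_{q(b)}(b)=b\cdot\varphi(b)$. Assume $q(a)=q(b)$; then $f_{q(a)}=f_{q(b)}$, so it suffices to prove $a\cdot\varphi(a)=b\cdot\varphi(b)$, after which injectivity of $f_{q(a)}$ forces $a=b$. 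Condition~(1) specialized to $y=x$ reads $(x\cdot\varphi(x))\cdot w=q(x)\cdot w$ for every $w$, so the left-translations by $a\cdot\varphi(a)$ and by $b\cdot\varphi(b)$ agree on the whole of $X$; a cancellation argument using the rack structure on $(X,\t)$, condition~(2) (which makes $x\cdot-$ a $\t$-morphism, in particular commuting with $\varphi$), and the finiteness of $X$ upgrades this pointwise equality to equality of the elements. Hence $q$ is injective, and by finiteness bijective.

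The principal obstacle is exactly this upgrade in the forward direction: passing from equality of the left-translations $(a\cdot\varphi(a))\cdot-\equiv(b\cdot\varphi(b))\cdot-$ to the equality $a\cdot\varphi(a)=b\cdot\varphi(b)$. In Rump's involutive setting the issue is invisible because $\varphi=\id$ and $f_{q(a)}(a)=q(a)$ holds trivially; in the non-involutive case one genuinely needs the rack structure coming from conditions~(1)--(3) together with finiteness of $X$, which is precisely why finiteness enters the hypothesis of the theorem.
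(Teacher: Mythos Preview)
Your identity $(\star)$ is correct and is indeed the right engine; however, the argument breaks at the point you yourself flag as the ``principal obstacle''.

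In the direction $(\Rightarrow)$ you reduce to showing $a\cdot\varphi(a)=b\cdot\varphi(b)$ from the equality of left-translations $(a\cdot\varphi(a))\cdot w=(b\cdot\varphi(b))\cdot w$ for all $w$, and then appeal to an unspecified ``cancellation argument''. This step is not valid in general: nothing in conditions~(1)--(3), the rack axiom, or finiteness forces the map $u\mapsto(u\cdot-)$ to be injective (think of the extreme case $u\cdot w=w$). The irony is that you already wrote down the one-line fix without using it: since $x\cdot-$ is a $\t$-morphism (condition~(2)) and hence commutes with $\varphi$, one has
\[
a\cdot\varphi(a)=\varphi(a\cdot a)=\varphi(q(a)),
\]
so $f_{q(a)}(a)=\varphi(q(a))$, i.e.\ $a$ is recovered from $q(a)$ as $a=f_{q(a)}^{-1}\bigl(\varphi(q(a))\bigr)$. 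This is exactly the paper's formula $x^{2}\circ\varphi(x^{2})=x$, and it makes the detour through left-translations unnecessary.

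In the direction $(\Leftarrow)$ your use of $(\star)$ to get $u=u'$ and then $x\t u=x'\t u$ is a clean alternative to the paper's route (the paper instead specializes condition~(1) at $z=x\t y$ and combines with condition~(2) to obtain $q\bigl(f_Y(x)\bigr)=Y\cdot\bigl(q(x)\t Y\bigr)$, exhibiting $f_Y$ directly as a composite of bijections). But your justification that $(X,\t)$ is a rack is circular: from $(u,v)$ you recover $Y=q^{-1}(v\cdot q(u))$, yet knowing $Y$ and $u=x*Y=g_x(Y)$ does \emph{not} determine $x$ via left non-degeneracy (that says $g_x$ is bijective, not $x\mapsto g_x(Y)$); to extract $x$ from $x\t u=u*v$ you already need $(-)\t u$ invertible, which is precisely what you are trying to establish. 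The paper sidesteps this by simply assuming $(X,\t)$ is a rack (equivalently $\sigma$ bijective) in the corresponding proposition; you should do the same rather than claim it follows.
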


This theorem is a consequence of the following two results, 
most of the parts work for non necessarily finite $X$:

\begin{prop}Let $(X,\cdot,\t)$ be a set together with two operations satisfying conditions 1,2,3 of Theorem \ref{xoverrack}.
If the $(X,\t)$ is a rack (that is,  $(-)\t y$ is bijective for all $y\in X$) 
and the map 
\[
X\longrightarrow X\hskip 1cm \]
\[
 x\mapsto x\cdot x=:x^2
 \]
 is bijective, then, for every $y$
 the map
\[
X\longrightarrow X\hskip 1cm 
 \]
 \[
x\mapsto (x*y)\cdot (x\t (x*y))\]
is bijective. That is, the corresponding solution of the Braid equation is also right-non degenerate.
\end{prop}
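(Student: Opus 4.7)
The plan is to extract from condition (1) of Theorem \ref{xoverrack} a single ``master identity'' that controls $f_y(x):=(x*y)\cdot(x\t(x*y))$ in terms of the squaring map $\mu(a):=a\cdot a=a^2$, and then combine bijectivity of $\mu$ with the rack hypothesis to prove both injectivity and surjectivity of $f_y$ directly.

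The master identity comes from specializing condition (1), namely $(b\cdot(a\t b))\cdot(b\cdot c)=(a\cdot b)\cdot(a\cdot c)$, to $a=x$ and $b=c=u$ with $u:=x*y$. The left hand side is $f_y(x)\cdot u^2$, the right hand side is $(x\cdot u)^2=y^2$ because $x\cdot(x*y)=y$, and one obtains
\[ f_y(x)\cdot (x*y)^2 \;=\; y^2. \]

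For injectivity, if $f_y(x)=f_y(x')$ I apply the master identity to both and use that $a\cdot(-)$ is bijective to cancel the common left factor, yielding $(x*y)^2=(x'*y)^2$; bijectivity of $\mu$ then gives $x*y=x'*y=:u$. Writing the equal values as $u\cdot(x\t u)=u\cdot(x'\t u)$ and applying the bijections $u\cdot(-)$ and $(-)\t u$ (the latter by the rack hypothesis) yields $x=x'$.

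For surjectivity, I reverse engineer the preimage of a given $v\in X$: the master identity forces $(x*y)^2=v*y^2$, so I set $u:=\mu^{-1}(v*y^2)$, and the equation $u\cdot(x\t u)=v$ forces $x\t u=u*v$, letting me set $x:=(u*v)\t^{-1}u$. It remains to verify the consistency relation $x*y=u$, i.e.\ $x\cdot u=y$: re-running the master-identity computation with this specific $x$ yields $f_y(x)\cdot u^2=(x\cdot u)^2$, and by construction the left hand side equals $v\cdot u^2=y^2$, so $(x\cdot u)^2=y^2$ and bijectivity of $\mu$ closes the argument. The only nontrivial step in the whole plan is spotting the correct specialization $b=c=x*y$ of condition (1); once the master identity is available, the three bijectivity assumptions do all the remaining work, and no finiteness of $X$ is needed.
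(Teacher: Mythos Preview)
Your proof is correct and provides a genuine alternative to the paper's argument. Both proofs extract an identity from condition~1 via the substitution $a=x$, $b=x*y$, but you take $c=b$, obtaining $f_y(x)\cdot(x*y)^2=y^2$, whereas the paper takes $c=a\t b$ and then uses condition~2 (that $x\cdot(-)$ is a $\t$-morphism) to simplify the right-hand side to $y\cdot(x^2\t y)$, arriving at $f_y(x)^2=y\cdot(x^2\t y)$. From there the paper observes that $\mu\circ f_y$ is visibly a composition of three bijections ($x\mapsto x^2\mapsto x^2\t y\mapsto y\cdot(x^2\t y)$), so $f_y$ is bijective in one stroke. Your route is slightly longer, handling injectivity and surjectivity separately, but it has the mild advantage of using only condition~1 together with the rack and squaring hypotheses, never invoking condition~2. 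Neither approach needs finiteness of $X$.

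One small notational wrinkle in your surjectivity paragraph: when you write ``re-running the master-identity computation \dots\ yields $f_y(x)\cdot u^2=(x\cdot u)^2$'', the symbol $f_y(x)$ is by definition $(x*y)\cdot(x\t(x*y))$, which coincides with $u\cdot(x\t u)$ only after $x*y=u$ is established --- precisely what you are trying to prove. What you are actually invoking is the raw condition-1 identity $(u\cdot(x\t u))\cdot u^2=(x\cdot u)^2$, valid for \emph{arbitrary} $x$ and $u$; since $u\cdot(x\t u)=v$ by your choice of $x$, the rest of the computation goes through exactly as you describe. This is purely cosmetic and the logic is sound.
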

\begin{proof}
Using the identity
\[
(a\cdot b)\cdot(a\cdot c)=(b\cdot (a\t b))\cdot (b\cdot c)
\]
with  $c=a\t b$
we get
\[
(b\cdot (a\t b))\cdot (b\cdot (a\t b))
=(a\cdot b)\cdot (a\cdot (a\t b))
\]
If  $b=x*y$ and  $a=x$ then
\[
\Big((x*y)\cdot \big(x\t (x*y)\big)\Big)\cdot 
\Big((x*y)\cdot (x\t (x*y))\Big)
=\]
\[
=\big(x\cdot (x*y)\big)\cdot \Big(x\cdot \big(x\t (x*y)\big)\Big)
=y\cdot (x^2\t y)
\]
Notice
 $z\mapsto y\cdot (z\t y)$ is bijective
 (with inverse $w\mapsto (y*w)\t^{-1}y$). Since we assume
  $x\mapsto x^2$ bijective, in order to see that
 \[
x\mapsto  (x*y)\cdot (x\t (x*y))
\] 
is bijective, it is enough to see
that the following map is bijective
 \[
x\mapsto  \Big((x*y)\cdot (x\t (x*y))\Big)^2
\] 
But the previous computation gives
\[
x\mapsto 
 \Big((x*y)\cdot (x\t (x*y))\Big)^2
=y\cdot (x^2\t y)
\]
we conclude that it is a composition of bijective maps, hence, bijective.
\end{proof}

\begin{prop}Let  $\sigma:X\times X\to X\times X$ be a
 bijective and left-non degenerate solution of the Braid equation. Assume  it is also right non-degenerate, that is
for all fixed $y$,
the map
\[
X\longrightarrow X\hskip 1cm 
 \]
\[
x\mapsto (x*y)\cdot (x\t (x*y))
\]
is bijective,
and call $y\circ z$ the inverse operation. That is
\[
y\circ z=x \iff z =(x*y)\cdot (x\t (x*y))
\]
Then
\begin{equation}
 x^2\circ \vphi(x^2)=x
 \end{equation}
 where $\vphi(a)=a\t a$.
 In particular the map
 $x\mapsto x^2$ is injective. If $X$ is finite then it is also bijective.
 \end{prop}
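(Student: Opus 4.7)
The plan is to unfold the definition of $\circ$ and reduce the identity $x^2\circ\varphi(x^2)=x$ to a single application of condition~2 of Theorem~\ref{xoverrack}. By definition of $\circ$, the identity is equivalent to
\[
\varphi(x^2)=(x*x^2)\cdot\bigl(x\t(x*x^2)\bigr).
\]
So first I would simplify $x*x^2$. Since $x^2=x\cdot x$ and $*$ is, by construction, the inverse of $\cdot$ on the first argument, we have $x*x^2=x*(x\cdot x)=x$. Hence the right-hand side collapses to
\[
x\cdot(x\t x)=x\cdot\varphi(x).
\]

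Next I would invoke condition~2 of Theorem~\ref{xoverrack}, $(x\cdot y)\t(x\cdot z)=x\cdot(y\t z)$, specialised to $y=z=x$. This yields
\[
\varphi(x^2)=(x\cdot x)\t(x\cdot x)=x\cdot(x\t x)=x\cdot\varphi(x),
\]
matching the simplified right-hand side. That proves the key identity $x^2\circ\varphi(x^2)=x$.

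Injectivity of the square map is then immediate: if $x^2=y^2$, then also $\varphi(x^2)=\varphi(y^2)$, and applying the formula just proved gives
\[
x=x^2\circ\varphi(x^2)=y^2\circ\varphi(y^2)=y.
\]
Finally, when $X$ is finite, an injective self-map of $X$ is automatically surjective, so $x\mapsto x^2$ is bijective. I do not foresee any real obstacle here: the only ``trick'' is recognising that $x*x^2=x$, after which condition~2 of Theorem~\ref{xoverrack} does all the work; the rest is formal.
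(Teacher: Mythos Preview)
Your proof is correct and follows essentially the same route as the paper's ``operational'' argument: reduce $x^2\circ\vphi(x^2)=x$ to $\vphi(x^2)=x\cdot(x\t x)$ via $x*x^2=x$, then finish with condition~2 of Theorem~\ref{xoverrack}. The paper additionally sketches a diagrammatic version of the same computation, but your explicit spelling-out of the injectivity step is, if anything, clearer than the paper's one-line conclusion.
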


\begin{proof}
 From the operational point of view
we can use the definition
\[
y\circ z=x \iff z =(x*y)\cdot (x\t (x*y))
\]
and compute, for $y=x ^2$ and $z=\vphi(x ^2)$,
giving
\[ x^2\circ \vphi(x^2)=x\iff
\vphi(x ^2)=
(x*x ^2)\cdot (x\t(x*x ^2))
\]
Clearly $x*x^2=x*(x\cdot x)=x$, so
\[ x^2\circ \vphi(x^2)=x\iff
\vphi(x ^2)=
x\cdot (x\t x)
\]
but using $\vphi(a)=a\t a$ and $a\cdot\vphi(x)=\vphi(a\cdot x)$
for all $a$ and $x$, we easily get
\[
x\cdot (x\t x)=x\cdot \vphi(x)=\vphi(x\cdot x)=\vphi(x^2)
\]
as desired. Also, from the diagramatic point of view,
\[
\xymatrix{
x\ar[rd]|\hole&y\ar[ld]\\
x*y&z=(x*y)\cdot (y\t (x*y))
}
\iff
\xymatrix{
y\circ z\ar[rd]|\hole&y\ar[ld]\\
(y\circ z)*y&z
}
\]
or equivalently
\[
\xymatrix{
x\ar[rd]|\hole&x\cdot y\ar[ld]\\
y&z=y\cdot (x\t y)
}
\iff
\xymatrix{
(x\cdot y)\circ z\ar[rd]|\hole&x\cdot y\ar[ld]\\
((x\cdot y)\circ z)*(x\cdot y)&z=y\cdot (x\t y)
}
\]
Taking $x=y$ gives
\[
\xymatrix{
x\ar[rd]|\hole&x ^2\ar[ld]\\
x&z=x\cdot (x\t x)
}
\iff
\xymatrix{
(x\cdot x)\circ z\ar[rd]|\hole&x\cdot x\ar[ld]\\
((x\cdot x)\circ z)*x ^2&z=x\cdot (x\t x)
}
\]
That is,
\[
\xymatrix{
x\ar[rd]|\hole&x ^2\ar[ld]\\
x&z=\vphi(x ^2)
}
\iff
\xymatrix{
x ^2\circ z\ar[rd]|\hole&x ^2\ar[ld]\\
(x ^2\circ z)*x ^2&z=\vphi(x ^2)
}
\]
so, $x ^2\circ z=x ^2\circ\vphi(x ^2)=x$.

In any case, we conclude that the map $x\mapsto x^2$ is injective.
\end{proof}

\begin{rem}
If $X$ is a finite set, then the bijectivity of $x\mapsto x^2$
 follows from injectivity.
\end{rem}

\section{Twisting by automorphisms and
linear automorphisms}

We call a bijective map $k:X\to X$ an automorphism
of $\sigma$ id $(k\times k) \sigma=
\sigma(k\times k)$.
We leave to the reader the proof of the following:

\begin{lem}Let $(X,\sigma)$ be a birack determined by the operations
$\cdot$ and $\t$. A bijective map $k:X\to X$ is an automorphism for 
$\sigma$ if and only if 
\[
k(x\cdot y)=k(x)\cdot k(y),\ \ \ 
k(x\t y)=k(x)\t k(y),\ \ \forall x,y\in X\] 
\end{lem}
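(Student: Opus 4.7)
The plan is to unfold the automorphism condition $(k\times k)\sigma = \sigma(k\times k)$ coordinate-by-coordinate using the formula
\[
\sigma(x,y)=\Big(x*y,\ (x*y)\cdot\big(x\t(x*y)\big)\Big),
\]
and then peel off the operations to isolate the claims about $\cdot$ and $\t$. A useful preliminary observation is that, since $*$ is the (second-variable) inverse of $\cdot$, a bijection $k$ is a morphism for $\cdot$ if and only if it is a morphism for $*$: if $k(x\cdot y)=k(x)\cdot k(y)$, setting $v=x*y$ gives $x\cdot v=y$, hence $k(x)\cdot k(v)=k(y)$, i.e.\ $k(v)=k(x)*k(y)$; and conversely.

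For the nontrivial direction, assume $(k\times k)\sigma = \sigma(k\times k)$. Comparing the first coordinates of
\[
\big(k(x*y),\ k((x*y)\cdot(x\t(x*y)))\big)
\quad\text{and}\quad
\big(k(x)*k(y),\ (k(x)*k(y))\cdot(k(x)\t(k(x)*k(y)))\big)
\]
yields $k(x*y)=k(x)*k(y)$, which I transfer to $k(x\cdot y)=k(x)\cdot k(y)$ by the equivalence above. Using this on the second coordinate, the left-hand side simplifies to $k(x*y)\cdot k(x\t(x*y))$, while the right-hand side is $k(x*y)\cdot(k(x)\t k(x*y))$. Cancelling the common left factor $k(x*y)\cdot -$ (which is injective by left non-degeneracy of $\cdot$) leaves
\[
k(x\t(x*y))=k(x)\t k(x*y).
\]
Since $x*-$ is a bijection, as $y$ runs over $X$ the element $w:=x*y$ runs over all of $X$, so this is equivalent to $k(x\t w)=k(x)\t k(w)$ for every $x,w\in X$.

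The converse is a direct substitution: if $k$ respects $\cdot$ (hence $*$) and $\t$, then applying $k\times k$ to $\sigma(x,y)$ commutes $k$ past each operation symbol and produces exactly $\sigma(k(x),k(y))$.

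There is no serious obstacle; the only point to be careful about is invoking the $*\leftrightarrow\cdot$ equivalence before attempting the cancellation on the second coordinate, so that the factor being cancelled is genuinely common to both sides.
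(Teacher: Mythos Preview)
Your proof is correct; the paper explicitly leaves this lemma to the reader, so there is no argument to compare against. Your approach---reading off the first coordinate to get the $*$ (hence $\cdot$) compatibility, then cancelling in the second coordinate to obtain the $\t$ compatibility, and finally using surjectivity of $x*(-)$ to pass from $k(x\t(x*y))=k(x)\t k(x*y)$ to the general identity---is exactly the natural one.
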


\subsection{First kind of twist}

Let  $\sigma:X\times X\to X\times X$ be a birack and
 $k:X\to X$
an automorphism.

\begin{defi}
 We define a new bijective map $X\times X\to X\times X$
  by
\[
\sigma_k:=
 (k^{-1}\times \id)\sigma(k\times \id)
=
(\id \times k)\sigma(\id\times k^{-1})
\]
\end{defi}

\begin{rem}
$\sigma_k$ is also a solution of the braid equation.
\end{rem}

On elements, the new map $\sigma_k$ is given by
\[
\sigma_k(x,y)
=(k ^{-1}\times \id)\sigma(k(x),y)
\]
\[
=(k ^{-1}\times \id)\Big(k(x)*y,
\big( k(x)*y\big)\cdot \big( k(x)\t (k(x)*y) \big)
\Big)
\]
\[
=\Big(x*k^{-1}(y),
\big( k(x)*y\big)\cdot \big( k(x)\t (k(x)*y) \big)
\Big)
\]
It is then natural to consider the new operation
$\*k$ dekined by
\def\*k{*_k}
\def\ck{\cdot_k}
\[
x*_k y:=x*k^{-1}(y),
\ \hbox{ with inverse } \
x\cdot_k y=
k(x\cdot y)\]
We clearly see that the first coordinate of $\sigma_k(x,y)$
is $x\*k y$, but if we compute
$(x\*ky)\ck (x\t(x\*ky))$ we get
\[
(x\*k y)\ck \big(x\t(x\*ky)\big)=
k\Big(\big(x*k ^{-1}(y)\big)\cdot  \big(x\t(x*k^{-1}(y))\big)
\Big)\]
\[
=
(k(x)*y)\cdot \Big(k (x)\t \big(k(x)*y)\big)\Big)
\]
that is, precisely  the second coordinate. 
We have proven the following:

\begin{lem}If $\sigma=\sigma_{(\cdot,\t)}$ and $k:X\to X$
is an automorphism of $\sigma$, then
\[
(\sigma_{(\cdot,\t)})_k
=
(k^{-1}\times k)\circ \sigma_{(\cdot,\t)}\circ(k\times \id)
=\sigma_{(\cdot_k,\t)}
\]
That is, the twisting procedure changes the dot operation but
preserves the rack one.
 \end{lem}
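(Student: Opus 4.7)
The plan is to compute $\sigma_k(x,y)$ directly from the definition and match the result, coordinate by coordinate, with the formula from Theorem \ref{xoverrack} applied to the pair $(\cdot_k, \t)$. The cleanest route is to use the second form of the definition, $\sigma_k = (\id\times k)\sigma(\id\times k^{-1})$, which isolates $k$ on a single slot and avoids having to move $k$ past the operations in the first coordinate.

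Evaluating on $(x,y)$ gives $(\id\times k)\,\sigma(x, k^{-1}(y))$, and plugging in the formula
\[
\sigma(x,z) = \Bigl(x*z,\ (x*z)\cdot\bigl(x\t(x*z)\bigr)\Bigr)
\]
with $z = k^{-1}(y)$ yields
\[
\sigma_k(x,y) = \Bigl(x*k^{-1}(y),\ k\bigl((x*k^{-1}(y))\cdot(x\t(x*k^{-1}(y)))\bigr)\Bigr).
\]
Using the definitions $x*_k y := x*k^{-1}(y)$ and $a\cdot_k b := k(a\cdot b)$, the first coordinate is $x*_k y$ and the second coordinate reads $(x*_k y)\cdot_k (x\t(x*_k y))$, which is exactly $\sigma_{(\cdot_k,\t)}(x,y)$. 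Crucially, $\t$ enters the final formula with its original arguments and no outer twist, so the derived rack is literally unchanged by the construction.

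The only place care is needed is in justifying that the two forms of the definition of $\sigma_k$ agree, which uses the automorphism identity $\sigma(k\times k) = (k\times k)\sigma$; if one prefers to work with the first form $(k^{-1}\times\id)\sigma(k\times\id)$, one invokes the preceding lemma to get $k(a*b) = k(a)*k(b)$ and simplifies the first coordinate $k^{-1}(k(x)*y)$ accordingly. Either way the computation is mechanical and there is no real conceptual obstacle; the whole content of the lemma is the observation that applying $k$ only to the second slot multiplies the $\cdot$-operation by $k$ on the outside and does not touch the $\t$-argument.
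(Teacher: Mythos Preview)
Your proof is correct and follows essentially the same approach as the paper: a direct element-wise computation of $\sigma_k(x,y)$, matched coordinate by coordinate against $\sigma_{(\cdot_k,\t)}(x,y)$. The only difference is cosmetic: you start from the form $(\id\times k)\sigma(\id\times k^{-1})$, which lets you read off both coordinates immediately, whereas the paper starts from $(k^{-1}\times\id)\sigma(k\times\id)$ and must then invoke the morphism identities $k(a\cdot b)=k(a)\cdot k(b)$ and $k(a\t b)=k(a)\t k(b)$ to recognize the second coordinate as $(x*_k y)\cdot_k\bigl(x\t(x*_k y)\bigr)$.
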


 \begin{rem}\label{autotwist}
 If $k:X\to X$ is an automorphism of $\sigma$, then it is also an automorphism of $\sigma_k=
 (k\times k^{-1})\circ\sigma
$, because $\sigma_k$ clearly commutes with $(k\times k)$ if
$\sigma$ does.
 \end{rem}
 
  \subsection{Skew-racks as twisting of racks}
  
Let $(X,\t)$ be a rack and consider the corresponding
solution $\sigma(x,y)=(y,x\t y)$. That is, we consider the trivial $*$ operation
\[
x*y=y,\ \forall x,y\in X\]
If $k:X\to X$, then trivially $k$ satiskies
\[
k(x\cdot y)=k(y)=x\cdot k(y)=k(x)\cdot k(y)
\]
So, the condition of being an automorphism of $\sigma$
 is equivalent to be an automorphism of the rack $(X,\t)$.  Then, the formula
 \[
 \sigma_{k^{-1}}(x,y)=
 (k\times \id)\sigma(k^{-1}\times \id)(x,y)
=\big(k(y), k^{-1}(x)\t y\big)
 \]
 is a solution of the Braid equation. Notice that the operation
 \[
 x\bt y:=k^{-1}(x)\t y
 \]
 is not necesarily a rack operation, because
 \[
 (x\bt y)\bt z:
 =k^{-1}\big(k^{-1}(x)\t y\big)\t z
 =\big(k^{-2}(x) \t k^{-1}(y)\big)\t z
 \]
 \[
 =\big(k^{-2}(x)\t z) \t (k^{-1}(y)\t z)
 \]
 \[
 =\big(k^{-1}(x)\bt z) \t (y\bt z)
 \]
 \[
 =\big(x\bt k(z)\big) \bt (y\bt z)
 \]
 However, $k$ is also an automorphism for the operation $\bt$ (see Remark \ref{autotwist}).
 Recall
   the notion of skew-racks, used in 
 \cite{N}, (we will use a black triangle for skew racks
 and reserve usual triangles for usual racks).
 
\begin{defi}  A triple $(X,\bt,k)$, where $X$ is a set, 
  $\bt$ is a binary operation on $X$, and
  $k:X\to X$ is a bijective map, verifying
 \begin{itemize}
 \item $(-)\bt  x:X\to X$
  is bijective for all $X$ and
\item $ k(x\bt y)=k(x)\bt k(y)$
\item $ (x\bt y)\bt z
 =\big(k(x)\bt z) \t (y\bt z)
 $
 \end{itemize}
 is called a {\bf skew-rack}.
 \end{defi}
 Given a skew rack $(X,\bt,k)$,
 one can easily check that
 \[
 \sigma(x,y):=(k(y),x\bt y)
 \]
 is a solution of the Braid equation.
   We saw that if $(X,\t)$
  is a rack and $k$ is a rack automorphism of $X$,
  then the operation $x\bt y:=k ^{-1}(x)\t y$, together with $k$
  is a skew rack structure on $X$. But conversely,
  if $(X,k,\bt)$ is a skew rack, then its associated rack is
  \[
  x\t y := k(x)\bt y
  \]
We conclude that if we twist a skew rack by $k$
then we obtain a rack solution, and $k$ is an automorphism of the rack (hence, an automorphism of the solution) and
the original skew-rack comes from twisting (by $k^{-1}$)
a usual rack solution.

\subsection{Second kind of twist: linear automorphisms}

We begin with a characterization of a special
family of automorphisms:

\begin{lem}
Let $\phi:X\to X$ be a bijection.
\begin{enumerate}
\item $\sigma(\id\times \phi)=(\phi\times\id) \sigma$
if and only if 
\[
\left\{
\begin{array}{rcl}
\phi(x\cdot y)&=&x\cdot \phi(y)
\\
y\cdot (x\t y)
&=&\phi(y)\cdot \big(x\t \phi(y))
\end{array}\right.
 \]
\item $\sigma(\phi\times \id)=(\id\times \phi) \sigma$
if and only if
\[
\left\{
\begin{array}{rcl}
x\cdot y&=&\phi(x)\cdot y\\
y\cdot \big(\phi(x)\t y  )
&=&\phi\big(y\cdot(x\t y)\big)
 \end{array}\right.
 \]

\item $\phi$ satisfies 
$\sigma(\id\times \phi)=(\phi\times\id) \sigma$
and $\sigma(\phi\times \id)=(\id\times \phi) \sigma$
if and only if

\[
\left\{
\begin{array}{rclrcl}
\phi(x\cdot y)&=&x\cdot \phi(y),&
\phi(x)\cdot y&=&x\cdot y
\\
\\
\phi(x\t y)  &=& \phi(x)\t y,&
x\t \phi(y)&=&x\t y
 \end{array}\right.
\] 
 \end{enumerate}
\end{lem}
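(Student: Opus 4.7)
The plan is to expand each of the three commutation relations on an arbitrary element $(x,y)\in X\times X$ using the formula
\[ \sigma(x,y) = \Big(x*y,\; (x*y)\cdot\big(x\t(x*y)\big)\Big), \]
compare first and second coordinates, and translate every identity about $*$ into one about $\cdot$ via the equivalence $x*y = z \iff y = x\cdot z$.

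For part (1) I compute
$\sigma(\id\times\phi)(x,y) = \big(x*\phi(y),\ (x*\phi(y))\cdot(x\t(x*\phi(y)))\big)$
and
$(\phi\times\id)\sigma(x,y) = \big(\phi(x*y),\ (x*y)\cdot(x\t(x*y))\big)$.
Equating first coordinates gives $x*\phi(y) = \phi(x*y)$; substituting $y = x\cdot z$ and using $x*(x\cdot z) = z$ converts this into $\phi(x\cdot z) = x\cdot \phi(z)$. Under this identity the two second-coordinate expressions both have the form $w\cdot(x\t w)$, with $w = \phi(x*y)$ on one side and $w = x*y$ on the other; since $y\mapsto x*y$ is a bijection, letting $w := x*y$ range over $X$ yields the universal identity $y\cdot(x\t y) = \phi(y)\cdot(x\t\phi(y))$. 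Part (2) is entirely analogous: the first coordinate reduces to $\phi(x)*y = x*y$, equivalent via the inverse operation to $\phi(x)\cdot y = x\cdot y$, and the second coordinate then gives $y\cdot(\phi(x)\t y) = \phi(y\cdot(x\t y))$.

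For part (3), the conjunction of (1) and (2) is the list of four identities just obtained; call them (a) $\phi(x\cdot y) = x\cdot\phi(y)$, (b) $y\cdot(x\t y) = \phi(y)\cdot(x\t\phi(y))$, (c) $\phi(x)\cdot y = x\cdot y$, (d) $y\cdot(\phi(x)\t y) = \phi(y\cdot(x\t y))$. I must show that in the presence of (a) and (c), identity (d) is equivalent to $\phi(x\t y) = \phi(x)\t y$ and identity (b) is equivalent to $x\t\phi(y) = x\t y$. For the former, (a) lets me rewrite the right-hand side of (d) as $\phi(y\cdot(x\t y)) = y\cdot\phi(x\t y)$, and cancelling the bijection $y\cdot(-)$ against $y\cdot(\phi(x)\t y)$ yields the claim. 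For the latter, (c) lets me rewrite $\phi(y)\cdot(x\t\phi(y)) = y\cdot(x\t\phi(y))$, turning (b) into $y\cdot(x\t y) = y\cdot(x\t\phi(y))$; cancelling $y\cdot(-)$ gives the conclusion. The converse direction reverses these substitutions.

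The proof is a routine unfolding; the only delicate point is bookkeeping around the two bijections $x\cdot(-)$ and $x*(-)$, ensuring that after each substitution the new variable genuinely ranges over all of $X$, and that cancellations are justified by injectivity (which is supplied here by left non-degeneracy).
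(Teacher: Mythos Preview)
Your proof is correct and follows essentially the same route as the paper. The only cosmetic difference is that the paper evaluates both sides on pairs of the form $(x,x\cdot y)$, exploiting the simplification $\sigma(x,x\cdot y)=(y,y\cdot(x\t y))$ from the start, whereas you evaluate on a general $(x,y)$ and then substitute $y=x\cdot z$ afterwards; the logical content and the manipulations in part~3 (using (a) to rewrite (d) and (c) to rewrite (b), then cancelling $y\cdot(-)$) are identical to the paper's.
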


\begin{proof}In order to prove 1 
we check the equality
$\sigma(\id\times \phi)=(\phi\times\id) \sigma$
on pairs of the form $(x,x\cdot y)$:
\[
(\phi\times\id) \sigma(x,x\cdot y)=
(\phi\times\id) (y,y\cdot (x\t y))=
(\phi(y),y\cdot (x\t y))
\]
On the other hand,
\[
\sigma(\id\times \phi)(x,x\cdot y)=
\sigma(x,\phi(x\cdot y))
\]
\[=
\Big(x*\phi(x\cdot y),
\big(x*\phi(x\cdot y)\big)\cdot \big(x\t (x*\phi(x\cdot y))
\big)
\Big)=
\]
so the equality holds if and only if both coordinates are equal.
From the first coordinate we get
$x*\phi(x\cdot y)=\phi(y)$, or equivalently
$\phi(x\cdot y)=x\cdot \phi(y)$. For the second coordinate
\[
y\cdot (x\t y)
=\big(x*\phi(x\cdot y)\big)\cdot \big(x\t (x*,\phi(x\cdot y))
\big)
\]
using $x*\phi(x\cdot y)=\phi(y)$, we get
that the equality holds if and only if
\[
y\cdot (x\t y)
=\phi(y)\cdot \big(x\t \phi(y))
\]

To prove 2 we proceed similarly, 
we check on elements of the form $(x,x\cdot y)$:

\[
\sigma(\phi\times \id)(x,x\cdot y)=\sigma(\phi(x),x\cdot y)
\]
\[
=\Big(
\phi(x)*(x\cdot y),
\big(\phi(x)*(x\cdot y)\big)\cdot \big(\phi(x)\t ( \phi(x)*(x\cdot y)  )\big)
\Big)
\]
while
\[
(\id\times \phi) \sigma(x,x\cdot y)=
(\id\times \phi) (y,y\cdot(x\t y))=
 \Big(y,\phi\big(y\cdot(x\t y)\big)\Big)
 \]
 So, the equality
 $\sigma(\phi\times \id)=(\id\times \phi) \sigma$ is equivalent to

\[
\left\{
\begin{array}{rcl}
\phi(x)*(x\cdot y)&=&y\\
\big(\phi(x)*(x\cdot y)\big)\cdot \big(\phi(x)\t ( \phi(x)*(x\cdot y)  )\big)
&=&\phi\big(y\cdot(x\t y)\big)
 \end{array}\right.
 \]
 which is clearly equivalent to
\[
\left\{
\begin{array}{rcl}
x\cdot y&=&\phi(x)\cdot y\\
y\cdot \big(\phi(x)\t y  )
&=&\phi\big(y\cdot(x\t y)\big)
 \end{array}\right.
 \]
 Part 3 is a consequence of 1 and 2 and some elementary manipulation. for example, using the second equality of 2
 and the first equality of 1
 
\[
 y\cdot \big(\phi(x)\t y  )
=\phi\big(y\cdot(x\t y)\big)
=y\cdot \phi(x\t y)
\] and since $y\cdot (-)$ is bijective 
we get $\phi(x)\t y = \phi(x\t y)$. The other equalities are similar.

\end{proof}

\begin{defi}
We say that a bijection $\phi:X\to X$ is a {\bf linear automorphism}
if and only if it satisfies condition 3, namely
\[
\left\{
\begin{array}{rclrcl}
\phi(x\cdot y)&=&x\cdot \phi(y),&
\phi(x)\cdot y&=&x\cdot y
\\
\\
\phi(x\t y)  &=& \phi(x)\t y,&
x\t \phi(y)&=&x\t y
 \end{array}\right.
\] 
\end{defi}

In terms of diagrams, $\phi$ is a linear automorpfism if
the following equalities hold:
\[
\xymatrix@-2ex{
\ar[rrdd]|\hole&&\ar[lldd]|(0.3){\phi}&&\ar[rrdd]|\hole&&\ar[lldd]|(0.7){\phi}
\\
&&&=&&&&\hbox{,}\\
&&&&&&
}\hskip 1cm 
\xymatrix@-2ex{
\ar[rrdd]|(0.25){\phi}|(0.51){\hole}&&\ar[lldd]&&
\ar[rrdd]|(0.48){\hole}|(0.71){\phi}&&\ar[lldd]
\\
&&&=&&&\\
&&&&&&
}\]

\begin{rem}
A linear automorphism is, in particular, an automorphism because
\[
\sigma (\phi\times \phi)
=
\sigma (\phi\times \id) (\id \times \phi)
\]
\[=
(\id \times \phi)
 \sigma (\id \times \phi)
=
(\id \times \phi) (\phi\times \id)
 \sigma=(\phi\times \phi) \sigma\]
 Or, in diagrams, it is clear that the equality
 \[
\xymatrix@-2ex{
\ar[rrdd]|(0.25){\phi}|(0.51){\hole}&&\ar[lldd]|(0.3){\phi}&&\ar[rrdd]|(0.48){\hole}|(0.71){\phi}&&\ar[lldd]|(0.7){\phi}
\\
&&&=&&&&\\
&&&&&&
}\]
is a consequence of the set of two diagramas below.
\end{rem}

\begin{defi}
With the same notations as before, denote
\[
\sigma^\phi:=\sigma(\id\times \phi)
\]
\[
{}^\phi\sigma:=\sigma(\phi\times\id)
\]
\end{defi}

\begin{rem}
Both $\sigma^\phi$ and ${}^\phi\sigma$ are new solutions of the
braid equation. For example
for $\sigma^\phi$ we look at the diagram
corresponding to
$
(\sigma ^\phi\times \id)(\id\times \sigma ^\phi)(\sigma^\phi\times \id)$
and get
\[
\xymatrix{
\ar[rd]|\hole&\ar[ld]|(0.4){\phi}&\ar@{=}[d]\\
\ar[d]& \ar[rd]|\hole&\ar[ld]|(0.4){\phi}&=&\\
\ar[rd]|\hole& \ar[ld]|(0.1){\phi}& \ar@{=}[d]\\
&&
}
\xymatrix{
\ar[rd]|\hole&\ar[ld]|(0.2){\phi}&\ar[d]|(0.1){\phi ^2}\\
\ar[d]& \ar[rd]|\hole&\ar[ld]\\
\ar[rd]|\hole& \ar[ld]& \ar@{=}[d]\\
&&
}
\]
and one can use the Braid equation for $\sigma$ and compare with
$
(\id\times \sigma ^\phi)
(\sigma ^\phi\times \id)(\id\times \sigma ^\phi)$.
The argument for ${} ^\phi\sigma$ is the same.
\end{rem}

\begin{rem}\label{linear}
Since $\phi$ clearly commutes with $\phi$ and $\phi^{-1}$,
it is clear that $\phi$ (and $\phi^{-1}$) is again a linear
 automorphism for both ${} ^{\phi}\sigma$ and
$\sigma ^\phi$.
\end{rem}

\subsection{Twisting by linear automorphism and operations}

In terms of the operations $\cdot$ and $\t$ we have,
for ${} ^\phi\sigma$:
\[
{}^\phi\sigma(x,y)=\sigma\big(\phi(x),y\big)
=\Big(
\phi(x)*y
,
(\phi(x)*y)\cdot \big(\phi(x)\t(\phi(x)*y)\big)
\Big)
\]
\[
=\Big(
x*y
,
(x*y)\cdot \big(\phi(x)\t(x*y)\big)
 \Big)
\]
and so, the operations $*$ and $\cdot$ do not change, but
the rack operation $\t$ changes into
$\t_\phi$ where
\[
x\t_\phi y =\phi(x)\t y
\]

Recall the general notation for twisting operations
under an automorphism, and in particular under a linear
automorphism $\phi$:
\[
 x\cdot_\phi y=x\cdot \phi(y)= \phi(x\cdot y),
 \ \ \ 
x*_\phi y=x*\phi^{-1}(y),
\ \ \  
x\t_\phi y =\phi(x)\t y\]

We have proven  the first part
of the following Proposition:

\begin{prop}
Let $\sigma=\sigma_{(\cdot ,\t)}$ denote the solution 
associated to 
a pair of operations $\cdot$ and $\t$
and let $\phi:X\to X$ be a linear automorphism. 
Then, the twisted solutions can be described by
\[
{}^\phi\Big(\sigma_{(\cdot,\t)}\Big)=
\sigma_{(\cdot,\t)}(\phi\times \id)
=\sigma_{(\cdot,\t_\phi)}
\]
\[
\Big(\sigma_{(\cdot,\t)}\Big)^{\phi^{-1}}=
\sigma_{(\cdot,\t)}(\id\times\phi)=
\sigma_{(\cdot_\phi,\t_{\phi^{-1}})}
\]

\end{prop}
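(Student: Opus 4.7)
The plan is to verify each equality of the proposition in turn by a direct symbolic calculation, using only the linearity axioms of $\phi$ and the defining formula $\sigma_{(\cdot,\t)}(x,y)=\bigl(x*y,(x*y)\cdot (x\t(x*y))\bigr)$. The first equalities in each line, namely $^\phi(\sigma_{(\cdot,\t)}) = \sigma_{(\cdot,\t)}(\phi\times\id)$ and $(\sigma_{(\cdot,\t)})^{\phi^{-1}} = \sigma_{(\cdot,\t)}(\id\times\phi^{\pm 1})$, are essentially unwindings of the definitions of ${}^\phi\sigma$ and $\sigma^{\phi^{\pm 1}}$; no real content is there.

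For the first line, the substantive claim $\sigma(\phi\times\id)=\sigma_{(\cdot,\t_\phi)}$ is already contained in the calculation immediately preceding the proposition statement: evaluating $\sigma(\phi(x),y)$ gives first coordinate $\phi(x)*y$, which collapses to $x*y$ by the linearity relation $\phi(x)\cdot y = x\cdot y$ (and hence $\phi(x)*y = x*y$), while the second coordinate reads $(x*y)\cdot\bigl(\phi(x)\t(x*y)\bigr) = (x*y)\cdot\bigl(x\t_\phi(x*y)\bigr)$ by definition of $\t_\phi$. So I would just promote that computation to a formal proof of this half.

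For the second line I would compute $\sigma(\id\times\phi^{-1})(x,y)=\sigma(x,\phi^{-1}(y))$ directly. Its first coordinate is $x*\phi^{-1}(y)=x*_\phi y$ straight from the definition $x*_\phi y := x*\phi^{-1}(y)$. For the second coordinate I would show that
\[
(x*_\phi y)\cdot\bigl(x\t(x*_\phi y)\bigr)\;=\;(x*_\phi y)\cdot_\phi\bigl(x\t_{\phi^{-1}}(x*_\phi y)\bigr),
\]
which by the definitions $a\cdot_\phi b = a\cdot\phi(b)$ and $a\t_{\phi^{-1}} b = \phi^{-1}(a)\t b$ reduces to showing $\phi\bigl(\phi^{-1}(x)\t(x*_\phi y)\bigr) = x\t(x*_\phi y)$. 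This follows immediately from two of the four linearity relations: $\phi(a\t b)=\phi(a)\t b$ collapses the outer $\phi$ onto the first argument, and then $\phi(\phi^{-1}(x))=x$ finishes it. Noting that $*_\phi$ is the two-sided inverse of $\cdot_\phi$ (a one-line check using $\phi(x\cdot y)=x\cdot\phi(y)$), we recognize the resulting expression as $\sigma_{(\cdot_\phi,\t_{\phi^{-1}})}(x,y)$.

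There is no real obstacle: the whole proof is bookkeeping. The only place one has to be careful is making sure that the inverse of $\cdot_\phi$ is correctly identified with $*_\phi$ under the linearity assumption (without linearity, $x\mapsto\phi^{-1}(x*y)$ and $x\mapsto x*\phi^{-1}(y)$ need not coincide), and that in each elimination of a $\phi$ one uses the correct relation among the four in the definition of a linear automorphism, rather than swapping the slots. The verification of $\sigma^2=\sigma^3=\cdots$ type identities plays no role; only the four pointwise linearity identities are needed.
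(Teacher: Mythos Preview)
Your proposal is correct and follows essentially the same route as the paper: both evaluate $\sigma(\id\times\phi^{-1})$ on elements, identify the first coordinate as $x*_\phi y$ directly from the definition, and then rewrite the second coordinate using the linearity identity $\phi(a\t b)=\phi(a)\t b$ to reach the form $(x*_\phi y)\cdot_\phi\bigl(x\t_{\phi^{-1}}(x*_\phi y)\bigr)$. Your extra remark that one must check $*_\phi$ is genuinely the inverse of $\cdot_\phi$ (which does require the linearity relation $\phi(x\cdot y)=x\cdot\phi(y)$) is a small clarification the paper leaves implicit, and your hedged ``$\phi^{\pm1}$'' correctly flags that the middle expression in the second displayed line of the statement should read $\sigma_{(\cdot,\t)}(\id\times\phi^{-1})$ by the definition $\sigma^\psi=\sigma(\id\times\psi)$, consistent with what both you and the paper actually compute.
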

\begin{proof}
Let us compute  $\sigma ^\phi$ on elements:

\[
\Big(\sigma_{(\cdot,\t)}\Big)^{\phi^{-1}}(x,y)
=\sigma_{(\cdot,\t)}(x,\phi^{-1}(y))
\]
\[
=\Big(
x*\phi^{-1}(y),
(x*\phi^{-1}(y))\cdot\big(
x\t(x*\phi^{-1}(y))
\big)
\Big)
\]
\[
=\Big(
x*_{\phi} y,
(x*_{\phi} y)\cdot\big(
x\t(x*_{\phi} y)
\big)
\Big)
\]
\[
=\Big(
x*_{\phi} y,
(x*_{\phi} y)\cdot_ \phi \phi ^{-1}\big(
x\t(x*_{\phi} y)
\big)
\Big)
\]
\[
=\Big(
x*_{\phi} y,
(x*_{\phi} y)\cdot_ \phi 
\big(\phi ^{-1}(x) \t(x*_{\phi} y) \big)
\Big)
\]
\[
=\Big(
x*_{\phi} y,
(x*_{\phi} y)\cdot_ \phi 
\big(x \t_ {\phi^{-1}}(x*_{\phi} y) \big)
\Big)
=\sigma_{(\cdot_\phi,\t_{\phi^{-1}})}(x,y)\]
\end{proof}

\subsection{The intrinsic  maps $\vphi$ and $i$}

If $(X,\t)$ is a rack, we denote
 $\vphi(x):=x\t x$. It is well-known that
$\vphi$ is a bijection, with inverse $i(x)=x\t^{-1}x$, and that $\vphi$ (and $i$) satisfy
\[
\vphi(x\t y)  =\vphi(x)\t y,
\hskip 1cm
x\t \vphi(y)=x\t y
\]
The map $i:X\to X$ is also denoted $\Tw$.

\begin{prop}
Assume $(X,\cdot,\t)$ is a finite birack, in particular
$(X,\t)$ is a rack. Define $\vphi$ 
(and $i$) as before.
Then $\vphi$ (and its inverse $i$) is
a linear automorphism of the birack.
\end{prop}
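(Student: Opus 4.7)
The plan is to verify the four defining identities of a linear automorphism one at a time; two of them come immediately from the classical theory of the canonical rack automorphism, and the remaining two follow from conditions 1 and 2 of Theorem \ref{xoverrack}, with finiteness entering only at the very last step.

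First, the two identities involving $\t$, namely $\vphi(x\t y) = \vphi(x)\t y$ and $x\t \vphi(y) = x\t y$, are precisely the well-known properties of $\vphi$ that were recalled just before the proposition; nothing further is needed.

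For $\vphi(x\cdot y) = x\cdot \vphi(y)$, the plan is to specialize condition 2 of Theorem \ref{xoverrack}, $(x\cdot y)\t(x\cdot z) = x\cdot(y\t z)$, by setting $z=y$. This gives $\vphi(x\cdot y) = (x\cdot y)\t(x\cdot y) = x\cdot(y\t y) = x\cdot\vphi(y)$ in one line.

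The main obstacle is the last identity $\vphi(x)\cdot y = x\cdot y$, which is not symmetric in shape to the others and is where finiteness is used. My strategy is to apply condition 1 of Theorem \ref{xoverrack} with $y$ replaced by $x$, obtaining $(x\cdot(x\t x))\cdot(x\cdot z) = (x\cdot x)\cdot(x\cdot z)$, and then rewrite $x\cdot(x\t x) = x\cdot\vphi(x) = \vphi(x^2)$ using the identity just proved. This yields $\vphi(x^2)\cdot(x\cdot z) = x^2\cdot(x\cdot z)$ for all $x,z$. Since $x\cdot-$ is surjective, the element $x\cdot z$ ranges over all of $X$, hence $\vphi(x^2)\cdot w = x^2\cdot w$ for every $w\in X$. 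Finally, invoking Theorem \ref{x2} (the point where the finite birack hypothesis is essential, since right non-degeneracy together with finiteness guarantees that $x\mapsto x^2$ is a bijection), every element $a\in X$ has the form $x^2$, so the identity promotes to $\vphi(a)\cdot w = a\cdot w$ for all $a, w\in X$, completing the verification. The argument for $i=\vphi^{-1}$ is then automatic, since the four conditions are preserved under inversion of $\phi$.
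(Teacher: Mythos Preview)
Your proof is correct and follows essentially the same route as the paper: the two $\t$-identities are quoted as known rack facts, the identity $\vphi(x\cdot y)=x\cdot\vphi(y)$ comes from specializing condition~2 to $z=y$, and the identity $\vphi(x)\cdot y=x\cdot y$ is obtained by specializing condition~1 to the diagonal, rewriting $x\cdot(x\t x)=\vphi(x^2)$, and then invoking the bijectivity of $x\mapsto x^2$ from Theorem~\ref{x2}. The only cosmetic difference is that the paper clears the inner variable by setting $c=a*y$ rather than by appealing to surjectivity of $x\cdot-$, which amounts to the same thing.
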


\begin{proof}
We need to prove the equalities
\[
\vphi(x\cdot y)\overset{?}{=}x\cdot \vphi(y)
,\ \ \ 
\vphi(x)\cdot y\overset{?}{=}x\cdot y
\]
The first one is easy: because 
$x\cdot (-)$ is a rack morphism, then
\[
x\cdot \vphi(y)=x\cdot (y\t y)=(x\cdot y)\t (x\cdot y)
=\vphi(x\cdot y)
\]
For the second equality, we use the identity
\[
(a\cdot b)\cdot(a\cdot c)=(b\cdot (a\t b))\cdot (b\cdot c)
\]
for 
$a=b$  and get
\[
(a\cdot a)\cdot(a\cdot c)=(a\cdot (a\t a))\cdot (a\cdot c)
\]
That is
\[
a ^2\cdot(a\cdot c)=(a ^2\t a^2)\cdot (a\cdot c)
\]
or
\[
a ^2\cdot(a\cdot c)=\vphi(a ^2)\cdot (a\cdot c)
\]
If $c=a* y$ we get
\[
a ^2\cdot y=\vphi(a ^2)\cdot y
\]
for all $y$ and $a$. But because $a\mapsto a ^2$ is bijective,
we get $x\cdot y=\vphi(x)\cdot y$ for all $x,y$.
\end{proof}

\begin{rem}
The hypotesis of $X$ being finite is only used to be sure that $x\mapsto x^2$ is bijective.
\end{rem}

\begin{coro}
Every finite birack is a twist (of the second kind) of a biquandle.
\end{coro}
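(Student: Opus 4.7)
The plan is to exhibit a specific linear automorphism of $\sigma$ that converts the derived rack into a quandle, and then invoke the formula for the second kind of twist together with the biquandle/quandle correspondence from Section 1.

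Concretely, set $\phi := i = \vphi^{-1}$, where $\vphi(x) = x\t x$. The preceding Proposition asserts that $\vphi$ is a linear automorphism of the (finite) birack; since being a linear automorphism is a symmetric condition closed under inverses, $\phi = i$ is one as well. By the formula derived in the previous subsection,
\[
{}^{\phi}\sigma \;=\; \sigma_{(\cdot,\,\t_{\phi})}, \qquad x\t_{\phi} y = \phi(x)\t y.
\]
In other words, ${}^{i}\sigma$ has the same $\cdot$ operation as $\sigma$ but a new rack operation $\t_{i}$.

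The crucial observation is that $(X, \t_{i})$ is automatically a quandle: by the very definition of $i$,
\[
x \t_{i} x \;=\; i(x)\t x \;=\; (x\t^{-1}x)\t x \;=\; x.
\]
Hence, by the Remark in Section 1 recalling \cite[Lemma 3]{FG}, the solution ${}^{i}\sigma$ is a biquandle.

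Finally, unwinding the definition ${}^{\phi}\sigma = \sigma(\phi\times\id)$, we get
\[
\sigma \;=\; \big({}^{i}\sigma\big)(i^{-1}\times \id) \;=\; \big({}^{i}\sigma\big)(\vphi\times\id) \;=\; {}^{\vphi}\!\big({}^{i}\sigma\big),
\]
and by Remark \ref{linear}, $\vphi$ remains a linear automorphism of ${}^{i}\sigma$. This exhibits $\sigma$ as a twist of the second kind of the biquandle ${}^{i}\sigma$. I expect no real obstacle: the only thing to notice is that the quandle condition $x\t_{\phi} x = x$ \emph{forces} $\phi(x) = x\t^{-1}x = i(x)$, after which everything follows from results already established.
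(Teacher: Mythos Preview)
Your proof is correct and follows essentially the same route as the paper: twist by $i=\vphi^{-1}$ to replace $\t$ by $\t_i$, observe $x\t_i x=(x\t^{-1}x)\t x=x$ so the derived rack becomes a quandle (hence ${}^i\sigma$ is a biquandle), and then recover $\sigma$ as ${}^{\vphi}({}^i\sigma)$ via Remark~\ref{linear}. The only difference is that you spell out a few steps more explicitly than the paper does.
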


\begin{proof}
If $(X,\t)$ is a rack, then
$(X,\t_i)$ is a quandle, because
\[
x\t_i x=i(x)\t x=(x\t ^{-1}x)\t x=x\]
and so
\[
{}^i\sigma=\sigma_{(\cdot,\t_i)}
\]
is a biquandle, because $(X,\t_i)$ is a quandle. But clearly
(see Remark \ref{linear})
\[
\sigma={} ^\vphi({}^i\sigma)
\]
So, the original $\sigma$ is a twist of a biquandle.
\end{proof}

 \section{Two applications of the bijectivity of $x\mapsto x\cdot x=:x^2$}
 \subsection{A remark on the envelopping group}
 
 Recall that given $\sigma:X\times X\to X\times X$ there is a group
 attached to it, called envelopping group and usually 
 denoted $G_X$, defined by
 \[
 G_X:=\frac{Free(X)}{\langle xy=zt : x,y\in X,
 \sigma(x,y)=(z,t)
 \rangle}
\]
For example if $\sigma(x,y)=(y,x\t y)$ then we have
\[
xy =y(x\t y) \ \in G_X
\]
and so 
\[
x\t y=y^{-1}xy \in G_X.
\]
For a general solutions, the relations should be written in terms of the choosen notation for $\sigma(x,y)$. Using
our proposal, we have
 we have the following equality
in $G_X$:
\[
xy =\Big(
x*y
\Big)
\Big(
\big(x*y\big)
\big(x\t (x*y)\big)
\Big)
\]
Under the usual bijection $(x,y)\leftrightarrow(x,x\cdot y)$
one can translate the above relations into
 \[
 x(x\cdot y)=y (y\cdot (x\t y))\in G_X, \ \ \forall x,y\in X
 \]
This expression is explicit and clear enough in order
to prove  the following property of $G_X$:

\begin{prop}
Let $\sigma:X\times X\to X\times X$ be a finite birack,
$\t$ the derived rack operation, and $\sim$ the equivalence
relation on $X$
generated by 
\[
x\sim x\t x=\vphi (x)
\]
Then $G_X=G_{X/\sim}$.
\end{prop}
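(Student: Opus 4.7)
The plan is to build mutually inverse group homomorphisms between $G_X$ and $G_{X/\sim}$. The crucial ingredient is that every $a\in X$ satisfies $a=\vphi(a)$ \emph{already inside} $G_X$; once this is established, both the projection $X\to X/\sim$ and its set-theoretic section $[x]\mapsto x$ extend to group homomorphisms between the two envelopping groups, and they are inverse to each other on generators, hence globally.

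To prove $a=\vphi(a)$ in $G_X$, I specialize the defining relation $x(x\cdot y)=y(y\cdot(x\t y))$ to $y=x$, getting $x(x\cdot x)=x(x\cdot(x\t x))$, and cancelling $x$ on the left in $G_X$ yields $x\cdot x=x\cdot \vphi(x)$. By condition 2 of Theorem \ref{xoverrack}, $x\cdot(-)$ is a morphism for $\t$, so the right-hand side equals $(x\cdot x)\t(x\cdot x)=\vphi(x^2)$. Hence $x^2=\vphi(x^2)$ in $G_X$ for every $x\in X$. Now Theorem \ref{x2} (using the finiteness of $X$) tells us $x\mapsto x^2$ is a bijection of $X$, so every element of $X$ has the form $x^2$, and therefore $a=\vphi(a)$ holds in $G_X$ for all $a$.

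It remains to check that $\sim$ is a birack congruence on $X$, so that $X/\sim$ inherits a birack structure and has its own envelopping group. Since $\sim$ is generated by $x\sim\vphi(x)$, this reduces to showing that replacing $x$ by $\vphi(x)$ on either side of $\cdot$ and $\t$ produces $\sim$-equivalent elements, which is exactly the content of ``$\vphi$ is a linear automorphism'' established in the previous subsection (the four identities $\vphi(x)\cdot y=x\cdot y$, $y\cdot\vphi(x)=\vphi(y\cdot x)\sim y\cdot x$, $\vphi(x)\t y=\vphi(x\t y)\sim x\t y$, $x\t\vphi(y)=x\t y$). With the birack structure on $X/\sim$ in hand, the projection extends to a homomorphism $f:G_X\to G_{X/\sim}$ because the relations of $G_X$ map to the analogous relations of $G_{X/\sim}$, while $[x]\mapsto x$ is well-defined on $X/\sim$ (thanks to $a=\vphi(a)$ in $G_X$) and sends the defining relations of $G_{X/\sim}$ to the defining relations of $G_X$, giving a homomorphism $g:G_{X/\sim}\to G_X$. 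Since $fg$ and $gf$ are identities on generators, they are identities on the whole groups, so $G_X\cong G_{X/\sim}$. The main obstacle is isolating the identity $x^2=\vphi(x^2)$ in $G_X$; everything else is a formal consequence of it together with Theorems \ref{xoverrack} and \ref{x2}.
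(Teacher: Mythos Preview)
Your proof is correct and follows essentially the same route as the paper: you specialize the defining relation $x(x\cdot y)=y\bigl(y\cdot(x\t y)\bigr)$ to $y=x$, use that $x\cdot(-)$ is a $\t$-morphism to rewrite $x\cdot\vphi(x)$ as $\vphi(x^2)$, obtain $x^2=\vphi(x^2)$ in $G_X$, and invoke the bijectivity of $x\mapsto x^2$ (Theorem \ref{x2}) to conclude $a=\vphi(a)$ in $G_X$ for all $a$. This is exactly the paper's argument. The paper stops there and leaves the passage from ``$a=\vphi(a)$ in $G_X$'' to ``$G_X=G_{X/\sim}$'' implicit; your additional verification that $\sim$ is a birack congruence (via the linear-automorphism identities for $\vphi$) and your explicit construction of mutually inverse homomorphisms supply precisely the details the paper omits.
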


\begin{proof}
From the general equality in $G_X$ 
 \[
 x(x\cdot y)=y (y\cdot (x\t y))\in G_X, \ \ \forall x,y\in X
 \]
take in particular $x=y$, and get
 \[
 x(x \cdot x)=x(x\cdot \vphi(x))=x\vphi(x\cdot x)
 \]
 hence
 \[
 x \cdot x=\vphi  (x\cdot x)\in G_X.
 \]
And because  $x\mapsto x\cdot x$ is bijective, we conclude
 \[
 x=\vphi(x) \in G_X
 \]
\end{proof}

 \subsection{\label{sectionnelson}
 Using biracks for painting oriented
framed links and Nelson's condition} 
 
 In a similar way that one may use quandles for coloring
 oriented knots or links and racks for {\em framed}
  knots and links,
 one can use  biquandles to color knots and links and biracks for framed knots or links. In the case of framed knots/links one has the second and third Reidemeister move, but the first Reidemeister moves should be replaced by the modified ones.
 Here is a picture of one of them, the other is its mirror image:
 
 \includegraphics[width=8cm]{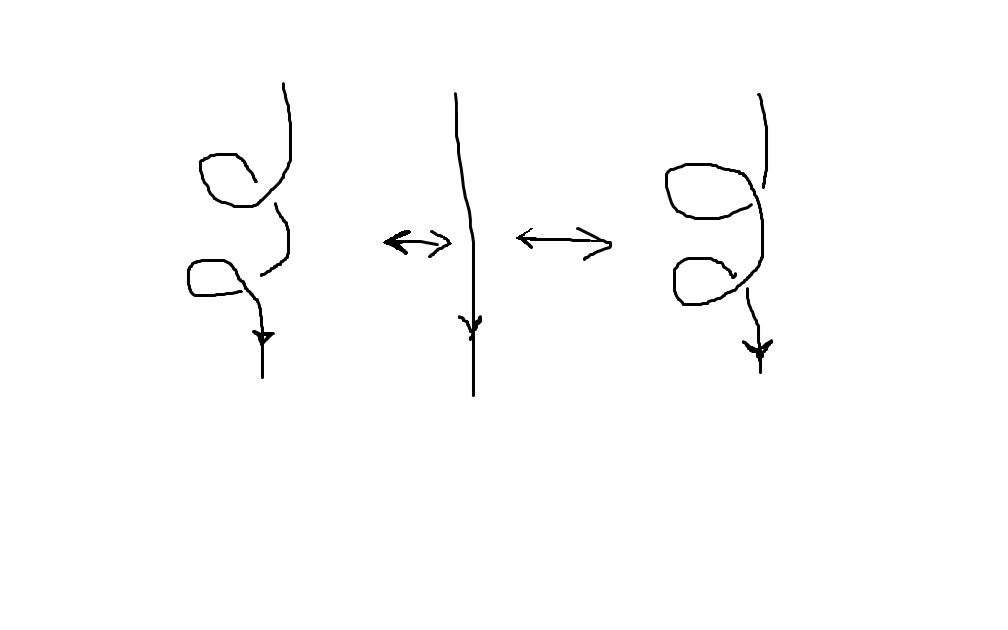}
 
 In case or racks, there is only one choice for coloring such a
 diagram begining with color $x$:
 
  \includegraphics[width=3.4cm]{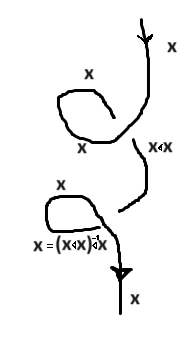}
 
For biracks, the situation doesn't look so easy, one first have 
 to solve the problem of coloring the top part of the diagram:

 \includegraphics[width=4cm]{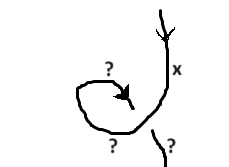}

 Recall the coloring rule now is
 \[
 \xymatrix{
a\ar[rd]|\hole&c\ar[ld]\\
a*c&(a*c)\cdot (a\t(a* c)) 
 }\]
 or equivalently
 \[
 \xymatrix{
a\ar[rd]|\hole&a\cdot b\ar[ld]\\
b&b\cdot (a\t b) 
 }\]
 So, if one wants $a=b$, then we need  $a$ such that $a\cdot a=x$:
 \[
 \xymatrix{
\ar@{=}[d]a\ar[rd]|\hole&a\cdot a = x\ar[ld]\\
a&a\cdot (a\t a)\hskip -1cm&\hskip -1cm =
\hskip -1cm&\hskip -0.5cm( a\cdot a)\t (a\cdot a)=\phi(x) 
 }\]
The question is, given $x$, can we found (a unique?) $a$
such that
$a\cdot a=x$?
 In \cite{Nelson}, the author propose a definition of birack as a 
 solution of the Braid equation that is, in principle, stronger than non-degenerate. Recall the definiton of {\em strongly invertible} with notation as in \cite{Nelson}:
 
\begin{defi}\cite[Definition 1]{Nelson}\label{defnelson}
 We will say a map $B : X \times X \to X \times X$ is strongly invertible provided $B$ satisfies the following three conditions:
\begin{itemize}
    \item $B$ is invertible, i.e., there exists a map $B^{-1} : X \times X \to X \times X$ satisfying $B \circ B^{-1} = \mathrm{Id}_{X \times X} = B^{-1} \circ B$,
    \item $B$ is sideways invertible, i.e., there exists a unique invertible map $S : X \times X \to X \times X$ satisfying
    \[
    S(B_1(x, y), x) = (B_2(x, y), y),
    \]
    for all $x, y \in X$, and
    \item the sideways maps $S$ and $S^{-1}$ are diagonally bijective, i.e., the compositions $S^{\pm 1}_1 \circ \Delta$, $S^{\pm 1}_2 \circ \Delta$ of the components of $S$ and $S^{-1}$ with the map $\Delta : X \to X \times X$ defined by $\Delta(x) = (x, x)$ are bijections.
\end{itemize}
\end{defi}
With our notation $B=\sigma$, $B_1(x,y)=x*y$, $B_2(x,y)=
(x*y)\cdot (x\t (x*y))$. Notice that  the second item of the definition means, looking at the diagram
\[
 \xymatrix{
x\ar[rd]|\hole&y\ar[ld]\\
B_1(x,y)&B_2(x,y), 
 }\]
that the left side of the diagram determines (using $S$) 
the right side, and viceversa. While the ``diagonally bijective condition''
means that the right hand side of this diagram 
\[
 \xymatrix{
a\ar[rd]|\hole&?\ar[ld]\\
a&? 
 }\]
 is uniquely determined by $a$, and viceversa.

It is not clear in \cite{Nelson} that the diagonally bijective condition on Definition \ref{defnelson}
is or is 
not a consequence of the left and right non-degeneracy, 
and this diagonally bijection is required in  \cite{Nelson}
as part of the
 definition
of  a birack. But we know that
\[
\sigma(x,x\cdot x)=(x, x\cdot(x\t x))=(x,\vphi(x\cdot x))
\]
or in diagram:

\[
 \xymatrix{
x\ar[rd]|\hole&x ^2\ar[ld]\\
x&\vphi(x ^2) 
 }\]

So, the bijectivity of Nelson's diagonal map
is equivalent to the bijectivity
of the map $x\mapsto x^2$, that we now know is a consequence
of right non-degeneracy (Theorem \ref{x2}).
Denote $\sqrt{\ }$ the inverse of $x\mapsto x\cdot x$.
Now it is clear that

 \[
 \xymatrix{
\sqrt{x}\ar[rd]|\hole&\sqrt{x}\cdot \sqrt{x}=x\ar[ld]\\
\ar@/^5ex/[u]\sqrt{x}&x\t x=\vphi(x)
 }\]
gives the unique coloring of that diagram with color $x$ 
in the top (right), and it is easy to check that using the inverse of $\vphi$
one can also color the botton part of the modified Reidemeister move, and coloring with biracks gives a rule that is compatible with modified first Reidemesiter move.
 We conclude that (finite)
 biracks (in the sense of bijective, and left and right non-degenerate solutions) are good objects to color
framed knots/links because they are compatible with modified
 Reidemeister moves, without any additional assumption
of ``diagonally bijective'' property.

As a side remark, if the ``Twisting'' map $X\to X$
corresponds
to a coloring rule using the twisting picture, then the twisting
map associated to a birack {\em is the same} as the twisting map
given by the associated (or derived) rack. In 
\cite{Nelson}, the twisting map is called ``kink map''.

 \[
 \xymatrix{
\ar@/ ^1ex/[rd]|(0.56){\hole}&x\ar@/^0.5ex/[ld]\\
 \ar@/^5ex/[u]&\Tw(x),&\Tw(x)=x\t x
 }\]

\section{\label{braces}
Comparison with Guarnieri-Vendramin's  skew-braces}

We end with a comparaison of the general situation and
the solution given by skew-braces.
Recall from \cite{GV} the definition of skew-brace.
We warn the reader that in this section we are going to use 
{\em additive} notation  
for {\em non necessarily commutative} groups.

\begin{defi}\cite[Definition 1.1]{GV}
A skew-brace structure on a set $A$ is the data of
two simultaneous group structures $(A,+)$ and $(A,\circ)$
satisfying
\[
a\circ(b+c)=a\circ b-a+a\circ b
\]
where we adopt the convention that the $\circ$ operation is
performed first, that is
\[
a\circ b-a+a\circ b:=(a\circ b)-a+(a\circ b)
\]
\end{defi}
A skew-brace with $(A,+)$
a {\em commutative} group is a (usual) brace.
 In 
\cite{GV} it is showed that for a skew brace $(A,+,\circ)$,
the formula

\[
r(x,y)=\Big(-x+x\circ y,(-x+x\circ y)'\circ x\circ y\Big)
\]
gives a solution of the Braid equation, where $a'$ is the inverse with
respect the circle operation.
This generalizes the situation of usual braces
considered by Rump:
 when
 $(A,+)$ is 
{\em commutative} one gets an {\em involutive} solution.

In our notation, $a*c$ corresponds to the first coordinate
of the solution, that is
 \[
 a*c:=-a+a\circ c\]
and so the inverse (dot) operation is computed as
 \[
a\cdot b=c\iff  a*c=b
\iff -a+a\circ c=b
\]
 \[
\iff c=a'\circ(a+b)\]
So, we get $a\cdot b=
a'\circ(a+b)$, hence
\[
r(x,x\cdot y)=r\big(x,x'\circ(x+y)\big)=
\]
\[
=\Big(-x+x\circ \big(x'\circ(x+y)\big),
\big(-x+x\circ (x'\circ(x+y))\big)'\circ x\circ \big(x'\circ(x+y)\big)
\Big)
\]
Notice as expected
$
-x+x\circ (x'\circ(x+y))
=-x+(x+y)=y$, so
\[
r(x,x\cdot y)
=\Big(y,
y'\circ x\circ \big(x'\circ(x+y)\big)
\Big)
\]
But also $x$ and $x'$ cancels with the circle operation so
\[
r(x,x\cdot y)
=\big(y,
y'\circ (x+y)\big)
\]
This should  be equal to  $(y,y\cdot(x\t y))$ 
where $\t$ is the derived rack structure. We will compute 
$\t$ in terms of the braces.
Recall the dot operation is given by
\[
a\cdot b=a'\circ(a+b)
\]
So,
\[
y'\circ  (x+y) =y\cdot (x\t y) \iff
\]

\[
\iff
y'\circ  (x+y) =y'\circ\big(y +(x\t y) \big)
 \]
\[
\iff
x+y =y +(x\t y) 
 \]
\[
\iff
-y+x+y =x\t y
 \]
 That is, the solutions given by skew braces are the ones
 whose derived rack is a group with conjugation as rack
  operation:
 $\Conj(A,+)$.

\end{document}